\documentclass[10pt,leqno]{amsart}
   \usepackage[centertags]{amsmath}
   \usepackage{amsfonts}
   \usepackage{amsmath}
   \usepackage{amssymb}
   \usepackage{amsthm}
   \usepackage{newlfont}
   \usepackage[latin1]{inputenc}
\usepackage{multirow, bigdelim}

\usepackage{graphicx}

\newcommand{\dosfilas}[2]{
  \ldelim[{2}{2mm}& #1 &\rdelim]{2}{2mm} \\
  & #2 & &  & &
}



\allowdisplaybreaks[3]

\theoremstyle{plain}
\newtheorem{theorem}{Theorem}[section]
\newtheorem{lemma}[theorem]{Lemma}

\newtheorem{corollary}[theorem]{Corollary}

\theoremstyle{definition}

\theoremstyle{remark}
\newtheorem{remark}[theorem]{Remark}
\newtheorem*{remark*}{Remark}

\numberwithin{equation}{section}

\newcommand\D{{\mathcal D}}
\newcommand\A{{\mathcal A}}

\newcommand\F{{\mathcal F}}
\newcommand\I{{\mathcal I}}
\newcommand\U{{\mathcal{U}}}
\newcommand\h{{\mathfrak h}}

\newcommand\CC{{\mathbb C}}

\newcommand\ZZ{{\mathbb Z}}

\newcommand\PP{{\mathbb P}}
\newcommand\II{{\mathbb I}}
\newcommand\UU{{\mathbb U}}

\newcommand{\Cas}{\mathrm {Cas\,}}
\newcommand\p{\mbox{$\mathfrak{p}$}}

\newcommand\Sh{\mbox{\Large $\mathfrak {s}$}}

   \parindent 10pt 

\title[]
{Constructing Krall-Hahn orthogonal polynomials}

\author{Antonio J. Dur\'an and Manuel D. de la Iglesia}
\address{A. J. Dur\'an\\
Departamento de An\'{a}lisis Matem\'{a}tico \\
Universidad de Sevilla \\
Apdo (P. O. BOX) 1160\\
41080 Sevilla. Spain.}
\email{duran@us.es}
\address{Manuel D. de la Iglesia\\
Instituto de Matem\'aticas \\
Universidad Nacional Aut\'onoma de M\'exico \\
Circuito Exterior, C.U.\\
04510, Mexico D.F. Mexico.}
\email{mdi29@im.unam.mx}
\thanks{Partially supported by MTM2012-36732-C03-03 (Ministerio de Economía y Competitividad),
FQM-262, FQM-4643, FQM-7276 (Junta de Andalucía) and Feder Funds (European
Union).}

\subjclass{33C45, 33E30, 42C05}

\keywords{Orthogonal polynomials. Difference operators and equations.
Hahn polynomials. Krall polynomials.}

   \date{}
   \begin{document}
   \maketitle

   \begin{abstract}
Given a sequence of polynomials $(p_n)_n$, an algebra of operators $\A$ acting in the linear space of polynomials and an operator $D_p\in \A$
with $D_p(p_n)=\theta_np_n$, where $\theta_n$ is any arbitrary eigenvalue, we construct a new sequence of polynomials $(q_n)_n$ by considering a linear combination of $m+1$ consecutive $p_n$: $q_n=p_n+\sum_{j=1}^m\beta_{n,j}p_{n-j}$. Using the concept of $\mathcal{D}$-operator, we determine the structure of the sequences $\beta_{n,j}, j=1,\ldots,m,$ in order that the polynomials $(q_n)_n$ are eigenfunctions of an operator in the algebra $\A$. As an application, from the classical discrete family of Hahn polynomials we construct orthogonal polynomials $(q_n)_n$
which are also eigenfunctions of higher-order difference operators.
\end{abstract}

\section{Introduction}
The  issue of orthogonal polynomials which are also eigenfunctions of a higher-order differential operator was raised by H. L. Krall in 1939, when he obtained a complete classification for the case of a differential operator of order four (\cite{Kr2}). After his pioneer work, orthogonal polynomials which are also eigenfunctions of higher-order differential operators are usually called Krall polynomials. This terminology can be extended for finite order difference and $q$-difference operators. Krall polynomials are also called bispectral, following the terminology introduced by Duistermaat and Gr\"unbaum (\cite{DG}; see also \cite{GrH1}, \cite{GrH3}).

Regarding Krall polynomials, there are important differences depending whether one considers differential or difference operators.
Indeed, roughly speaking, one can construct Krall polynomials $q_n(x)$, $n\ge 0$,  by using the Laguerre $x^\alpha e^{-x}$, or Jacobi weights $(1-x)^\alpha(1+x)^\beta$, assuming that one or two of the parameters $\alpha$ and $\beta$ are nonnegative integers and adding a linear combination of Dirac deltas and their derivatives at the endpoints of the orthogonality interval (\cite{Kr2}, \cite{koekoe}, \cite{koe}, \cite{koekoe2}, \cite{L1}, \cite{L2}, \cite{GrH1}, \cite{GrHH}, \cite{GrY}, \cite{Plamen1}, \cite{Plamen2}, \cite{Zh}). This procedure of adding deltas seems not to work if we want to construct Krall discrete polynomials from the classical discrete measures of Charlier, Meixner, Krawtchouk and Hahn (see the papers \cite{BH} and \cite{BK} by Bavinck, van Haeringen and Koekoek answering, in the negative,
a question posed by R. Askey in 1991 (see pag. 418 of \cite{BGR})).

As it has been shown recently by one of us, instead of adding deltas, Krall discrete polynomials can be constructed by multiplying the classical discrete weights by certain polynomials (see \cite{du0}).
The kind of transformation which consists in multiplying a measure $\mu$ by a polynomial $r$ is called a Christoffel transform.
It has a long tradition in the context of orthogonal polynomials: it goes back a century and a half ago when
E. B. Christoffel (see \cite{Chr} and also \cite{Sz}) studied it for the particular case $r(x)=x$.

Using suitable polynomials, a number of conjectures have been posed in \cite{du0} on how to construct bispectral polynomials from
the families of Charlier, Meixner, Krawtchouk and Hahn. For Charlier, Meixner and Krawtchouk, those conjectures have been proved by the authors in \cite{ddI}. Those families have in common that the eigenvalues for their associated second-order difference operator are linear sequences in $n$.

The purpose of this paper is to prove the conjecture 5 in \cite{du0} for the Hahn polynomials. In this case, the eigenvalues for its associated second-order difference operator are now a second degree polynomial in $n$. We consider higher-order difference operators of the form
\begin{equation}\label{doho}
D=\sum_{l=s}^rh_l\Sh _l, \quad s\le r,\quad s,r\in \ZZ,
\end{equation}
where $h_l$ are polynomials and $\Sh_l$ stands for the shift operator $\Sh_l(p)=p(x+l)$. If $h_r,h_s\not =0$, the order of $D$ is then $r-s$. We also say that $D$ has genre $(s,r)$.

The conjecture we will prove here is the following:

\medskip
\noindent
\textbf{Conjecture.}
Let $\rho_{a,b,N} $ be the Hahn weight (see Section \ref{SEC4} for details). Given a quartet of finite sets $\F=(F_1,F_2,F_3,F_4)$ of positive integers (the empty set is allowed) consider the weight $\rho _{a,b,N}^{\F}$ defined by
\begin{equation}\label{udspmi}
\rho _{a,b,N}^{\F}=\prod_{f\in F_1}(b+N+1+f-x)\prod_{f\in F_2}(x+a+1+f)\prod_{f\in F_3}(N-f-x)\prod_{f\in F_4}(x-f)\rho _{a,b,N}.
\end{equation}
Assume that the measure $\rho _{a,b,N}^{\F}$ has an associated sequence of orthogonal polynomials. Then they are eigenfunctions of a higher-order difference operator of the form (\ref{doho}) with
$$
-s=r=\sum_{f\in F_1,F_2,F_3,F_4}f-\sum_{i=1}^4\binom{n_{F_i}}{2}+1,
$$
where $n_F$ denotes the cardinal of $F$.
\medskip

The content of this paper is as follows. In order to prove the conjecture above, we use the approach developed in \cite{ddI} for constructing Krall polynomials from the Charlier, Meixner and Krawtchouk families. The main ingredients of this approach will be considered in Sections \ref{SEC2} and \ref{SEC3}. The
first ingredient is the $\D$-operators (see Section 3). This is an abstract concept introduced in \cite{du1} by one of us which has shown to be very useful to generate Krall, Krall discrete and $q$-Krall families of polynomials (see \cite{du1}, \cite{AD}, \cite{ddI}, \cite{ddI1}, \cite{dudh}).
For a positive number $m$ and $m$ polynomials $Y_j$, $j=1,\ldots, m$, (which act as parameters)
we can construct from the classical discrete families $(p_n)_n$, using $\D$-operators, a huge class of families of polynomials $(q_n)_n$ which are also eigenfunctions of difference operators of the form (\ref{doho}). The sequence of polynomials $(q_n)_n$ are not in general orthogonal.
The second ingredient establishes how to choose the polynomials $Y_j$'s such that the polynomials $(q_n)_n$ are also orthogonal with respect to a measure. As for the case of Charlier, Meixner and Krawtchouk (studied in \cite{ddI}), this second ingredient turns into a very nice symmetry between the Hahn family and the polynomials $Y_j$'s. Indeed, the polynomials $Y_j$'s can be chosen to be dual Hahn polynomials, but with a suitable modification of the parameters.

In Sections \ref{SEC4} and \ref{sch} we will put together all these ingredients to construct bispectral Krall-Hahn orthogonal polynomials and prove the conjecture above.

\section{Preliminaries}\label{PREM}

For a linear operator $D$ acting in the linear space of polynomials $\mathbb{P}$, i.e. $D:\mathbb{P}\rightarrow\mathbb{P}$, and a polynomial $P(x)=\sum _{j=0}^ka_jx^j$, the operator $P(D)$ is defined in the usual way as $P(D)=\sum _{j=0}^ka_jD^j$.

Let $\mu$ be a moment functional on the real line, that is, a linear mapping $\mu:\mathbb{P}\rightarrow\mathbb{R}$. It is well-known that any moment
functional on the real line can be represented by integrating with respect to a Borel measure (positive or not) on the real line (this representation is not unique, see \cite{du-1}). If we also denote this measure by $\mu$, we have $\langle \mu, p\rangle=\int p(x)d\mu(x)$ for all polynomials $p\in\mathbb{P}$. Taking this into account, we will conveniently use along this paper one
or other terminology (orthogonality with respect to a moment functional or with
respect to a measure). We say that a sequence of polynomials $(p_n)_n$, $p_n$ of degree $n$, $n\geq0$, is orthogonal with respect to the moment functional $\mu$ if $\langle \mu, p_np_m\rangle=0$, for $n\not =m$ and $\langle \mu, p_n^2\rangle\not =0$.  Since the Hahn polynomials considered in this paper are orthogonal with respect to a degenerate measure (the measure is a finite combination of Dirac deltas), we will stress this property of non-vanishing norms when necessary.

\medskip

As we wrote in the Introduction, the kind of transformation which consists in multiplying a moment functional $\mu$ by a polynomial $r$ is called a Christoffel transform. The new moment functional $r\mu$ is defined by $\langle r\mu,p\rangle =\langle \mu,rp\rangle $. Its reciprocal is the Geronimus transform $\tilde \mu$ which satisfies $r\tilde \mu =\mu$. Notice that the Geronimus transform of the moment functional $\mu$ is not uniquely defined. Indeed, write $a_i$, $i=1,\ldots , u$, for the different real roots of the polynomial $r$, each one with multiplicity $b_i$, respectively.
It is easy to see that if $\tilde \mu$ is a Geronimus transform of $\mu$ then the moment functional $\tilde \mu +\sum_{i=1}^u\sum_{j=0}^{b_i-1}\alpha_{i,j}\delta _{a_i}^{(j)}$ is also a Geronimus transform of $\mu$, where $\alpha_{i,j}$ are real numbers. These numbers are usually called the free parameters of the Geronimus transform.

In the literature, Geronimus transform is sometimes called Darboux transform with parameters while Christoffel transform is called Darboux transform without parameters. The reason is the following. The three-term recurrence relation
\begin{equation*}\label{fvo}
x p_n(x )=a_{n+1}p_{n+1}(x)+b_np_n(x)+c_np_{n-1}(x), \quad n\geq0,
\end{equation*}
for the orthogonal polynomials $(p_n)_n$ with respect to $\mu$ can be rewritten as $x p_n=J(p_n)$, where $J$ is the second-order difference operator $J=a_{n+1}\Sh_1+b_n\Sh_0+c_n\Sh_{-1}$ and $\Sh_l$ is the shift operator (acting on the discrete variable $n$): $\Sh_l(x_n)=x_{n+l}$.
For any $\lambda \in \CC $, decompose $J$ into $J=AB+\lambda I$
whenever it is possible, where $A=\alpha_n\Sh_0+\beta_n\Sh_1$ and $B= \delta_n\Sh_{-1}+\gamma_n\Sh_0$. We then call $\tilde J=BA+\lambda I$
a Darboux transform of $J$ with parameter $\lambda$. It turns out that the second-order difference operator $\tilde J$ associated to a Geronimus transform $\tilde \mu$ of $\mu$ can be obtained by applying a sequence of  $k$ Darboux transforms (with parameters $\lambda_i$, $i=1,\ldots, k$) to the operator $J$ associated to the measure $\mu$. This kind of Darboux transform has been used by Gr\"unbaum, Haine, Hozorov, Yakimov and Iliev to construct Krall and $q$-Krall polynomials (\cite{GrHH}, \cite{GrY}, \cite{HP}, \cite{Plamen1} or \cite{Plamen2}).

The family of measures $\rho _{a,b,N}^\F$ \eqref{udspmi} in the Introduction is defined by applying a Christoffel transform to the Hahn weight. But, it turns out that they can also be defined by using the Geronimus transform. This Geronimus transform is however defined by a different polynomial. We also have to make
a suitable choice of the free parameters of this Geronimus transform and apply it to a Hahn weight but maybe with different parameters and affected by a shift in the variable. The following example will clarify this point. Consider $F_1=\{1\}$, $F_2=\{ 1\}$, $F_3=\{1\}$, $F_4=\{1\}$ and the Christoffel transform $\rho_{a,b,N}^{\F}$ of the Hahn weight $\rho _{a,b,N}$ defined by the polynomial $p(x)=(b+N+2-x)(x+a+2)(N-1-x)(x-1)$. That is,
$$
\rho_{a,b,N}^{\F}=(b+N+2-x)(x+a+2)(N-1-x)(x-1)\rho _{a,b,N}.
$$
From the definition of the Hahn weight $\rho _{a,b,N}$ (see \eqref{pesoH} below), we have after a simple computation
\begin{align*}
\rho_{a,b,N}^{\F}=&-\frac{\Gamma(N+b+3)(N-1)(a+2)}{(N+b+1)\Gamma(b+1)}\delta_0-\frac{\Gamma(N+a+3)(N-1)(b+2)}{(N+a+1)\Gamma(a+1)}\delta_N\\
&\frac{\Gamma(N+1)}{\Gamma(a+1)\Gamma(b+1)}\sum_{x=2}^{N-2}\frac{\Gamma(N-x+b+3)\Gamma(x+a+3)}{(a+x+1)(N-x+b+1)x(x-2)!(N-x-2)!}\delta_x
\end{align*}
This shows that
$$
x(N-x)(a+x+1)(N-x+b+1)\rho_{a,b,N}^{\F}=(N-3)_4(a+1)_4(b+1)_4\rho_{a+4,b+4,N-4}(x-2),
$$
where $(z)_j=z(z+1)\cdots(z+j-1)$ stands for the Pochhammer symbol.

That is, $\rho_{a,b,N}^{\F}$ is also the Geronimus transform defined by the polynomial $x(N-x)(a+x+1)(N-x+b+1)$ of the Hahn weight $\rho_{a+4,b+4,N-4}(x-2)$ where the free parameters (associated to the roots $x=0, N, -a-1, N+b+1$) have to be necessarily chosen equal to $-\frac{\Gamma(N+b+3)(N-1)(a+2)}{(N+b+1)\Gamma(b+1)}$, $\frac{\Gamma(N+a+3)(N-1)(b+2)}{(N+a+1)\Gamma(a+1)}$, 0 and $0$, respectively.

\medskip

Along this paper, we use the following notation: given a finite set of positive integers $F=\{f_1,\ldots, f_m\}$, the expression
\begin{equation}\label{defdosf}
  \begin{array}{@{}c@{}lccc@{}c@{}}
    & &&\hspace{-1.3cm}{}_{j=1,\ldots , m} \\
    \dosfilas{ z_{f,j} }{f\in F}
  \end{array}
\end{equation}
inside of a matrix or a determinant will mean the submatrix defined by
$$
\begin{pmatrix}
z_{f_1,1}&z_{f_1,2}&\cdots&z_{f_1,m}\\
\vdots&\vdots&\ddots&\vdots\\
z_{f_m,1}&z_{f_m,2}&\cdots&z_{f_m,m}
\end{pmatrix}
$$

\section{$\D$-operators}\label{SEC2}
The concept of $\D$-operator was introduced by one of us in \cite{du1}. In  \cite{du1}, \cite{ddI}, \cite{ddI1} and \cite{AD}, it has been showed that $\D$-operators turn out to be an extremely useful tool of an unified method to generate families of polynomials which are eigenfunctions of higher-order differential, difference or $q$-difference operators.
Hence, we start by reminding the concept of $\D$-operator.

The starting point is a sequence of polynomials $(p_n)_n$, $\deg p_n=n$, and an algebra of operators $\A $ acting in the linear space of
polynomials $\mathbb{P}$. For the Hahn polynomials, we will consider the algebra $\A$ formed by all finite order difference operators, i.e.
\begin{equation}\label{algdiffd}
\A =\left\{ \sum_{l=s}^rh_l\Sh_l : h_l\in \PP, l=s,\ldots,r, s\leq r \right\},
\end{equation}
where $\Sh_l$ stands for the shift operator $\Sh_l(p)=p(x+l)$. If $h_r,h_s\not =0$, the order of $D$ is then $r-s$. We also say that $D$ has genre $(s,r)$.

In addition, we assume that the polynomials $p_n$, $n\ge 0$, are eigenfunctions of certain operator $D_p\in \A$. We write $(\theta_n)_n$ for the corresponding eigenvalues, so that $D_p(p_n)=\theta_np_n$, $n\ge 0$. Although for the Hahn polynomials $\theta_n$ is a second degree polynomial in $n$, in this section we do not assume any constraint on the sequence $(\theta_n)_n$.

Given two sequences of numbers $(\varepsilon_n)_n$ and $(\sigma_n)_n$, a $\D$-operator associated to the algebra $\A$ and the sequence of polynomials $(p_n)_n$ is defined as follows. We first consider  the operator $\D :\PP \to \PP $ defined by linearity from
\begin{equation*}\label{defTo}
\D (p_n)=-\frac{1}{2}\sigma_{n+1}p_n+\sum _{j=1}^n (-1)^{j+1}\sigma_{n-j+1}\varepsilon _n\cdots \varepsilon _{n-j+1}p_{n-j},\quad n\ge 0.
\end{equation*}
We then say that $\D$ is a $\D$-operator if $\D\in \A$. In \cite{du1} this kind of $\D$-operator was called of type 2; $\D$-operators of type 1 appears when the sequence $(\sigma_n)_n$ is constant.

As it was shown in \cite{du1}, \cite{dudh} and \cite{ddI}, $\D$-operators of type 1 are useful when the eigenvalues $\theta_n$ are linear in $n$. Otherwise, we have to consider $\D$-operators of type 2. The purpose of this section is to extend the method developed in \cite{ddI} to $\D$-operators of type 2. Therefore we will be able to construct, from the sequence $(p_n)_n$,  new sequences of polynomials $(q_n)_n$ such that there exists an operator $D_q\in \A$ for which they are eigenfunctions.

To do that, we will consider a combination of $m$, $m\ge 1$, consecutive $p_n$'s.
We also use $m$ arbitrary polynomials $Y_1, Y_2, \ldots, Y_m,$ and $m$ $\D$-operators, $\D_1, \D_2, \ldots, \D_m,$ (not necessarily different) defined by the pairs of sequences $(\varepsilon_n^h)_n$, $(\sigma_n^h)_n$, $h=1,\ldots , m$:
\begin{equation}\label{Dh}
\mathcal{D}_h(p_n)=-\frac{1}{2}\sigma_{n+1}^hp_n+\sum _{j=1}^n (-1)^{j+1}\sigma_{n-j+1}^h\varepsilon_{n}^{h}\cdots\varepsilon_{n-j+1}^{h}p_{n-j}.
\end{equation}

We will assume that for $h=1,2,\ldots,m$, the sequences $(\varepsilon_{n}^{h})_n$ and $(\sigma_{n}^{h})_n$ are rational functions in $n$. We write $\xi_{x,i}^h$, $i\in\ZZ$ and $h=1,2,\ldots,m$, for the auxiliary functions defined by
\begin{equation}\label{defxi}
\xi_{x,i}^h=\prod_{j=0}^{i-1}\varepsilon_{x-j}^{h}, \quad i\ge 1,\quad \quad \xi_{x,0}^h=1,\quad\quad \xi_{x,i}^h=\frac{1}{\xi_{x-i,-i}^h},\quad i\leq-1.
\end{equation}
We will consider the $m\times m$ (quasi) Casorati determinant defined by
\begin{equation}\label{casd1}
\Omega (x)=\det \left(\xi_{x-j,m-j}^lY_l(\theta_{x-j})\right)_{l,j=1}^m.
\end{equation}

The details of our method are included in the following Theorem:

\begin{theorem}\label{Teor1} Let $\A$ and $(p_n)_n$ be, respectively, an algebra of operators acting in the linear space of polynomials, and a sequence of polynomials $(p_n)_n$, $\deg p_n=n$. We assume that $(p_n)_n$ are eigenfunctions of an operator $D_p\in \A$, that is, there exist numbers $\theta_n, n\geq0$, such that $D_p(p_n)=\theta_np_n$, $n\ge 0$. We also have $m$ pairs of sequences of numbers $(\varepsilon_n^h)_n$, $(\sigma_n^h)_n$, $h=1,\ldots , m$, which define $m$ $\D$-operators $\D_1,\ldots,\D_m$ (not necessarily different) for $(p_n)_n$ and $\A$ (see \eqref{Dh})) and assume that for $h=1,2,\ldots,m$, each one of the sequences $(\varepsilon_{n}^{h})_n$, $(\sigma_n^h)_n$ is a rational function in $n$.

Let $Y_1, Y_2, \ldots, Y_m,$ be $m$ arbitrary polynomials satisfying that $\Omega (n)\not =0$, $n\ge 0$, where $\Omega $ is the Casorati determinant defined by \eqref{casd1}.

Consider the sequence of polynomials $(q_n)_n$ defined by
\begin{equation}\label{qus}
q_n(x)=\begin{vmatrix}
               p_n(x) & -p_{n-1}(x) & \cdots & (-1)^mp_{n-m}(x) \\
               \xi_{n,m}^1Y_1(\theta_{n}) &  \xi_{n-1,m-1}^1Y_1(\theta_{n-1}) & \cdots & Y_1(\theta_{n-m}) \\
               \vdots & \vdots & \ddots & \vdots \\
                \xi_{n,m}^mY_m(\theta_{n}) &  \xi_{n-1,m-1}^mY_m(\theta_{n-1}) & \cdots & Y_m(\theta_{n-m})
             \end{vmatrix}.
\end{equation}
For a rational function $S$, we define the function $\lambda_x$ by
\begin{equation}\label{lamd}
\lambda_x-\lambda_{x-1}=S(x)\Omega(x),
\end{equation}
and for $h=1,\ldots,m$, we define the function $M_h(x)$ by
\begin{equation}\label{emeiexp}
M_h(x)=\sum_{j=1}^m(-1)^{h+j}\xi_{x,m-j}^hS(x+j)\det\left(\xi_{x+j-r,m-r}^{l}Y_l(\theta_{x+j-r})\right)_{l\in \II_h;r\in\II_j},
\end{equation}
where $\II_h=\{1,2,\ldots , m\}\setminus \{ h\}$. Assume the following:
\begin{align}\label{ass0}
&\mbox{$S(x)\Omega (x)$ is a polynomial in $x$.}\\\label{ass1}
&\mbox{There exist $\tilde{M}_1,\ldots,\tilde{M}_m,$  polynomials in $x$ such that}\\\nonumber
&\hspace{1cm}\mbox{$M_h(x)=\sigma_{x+1}^h\tilde{M}_h(\theta_x)$, $h=1,\ldots,m.$}\\\label{ass2}
&\mbox{There exists a polynomial $P_S$ such that $\displaystyle P_S(\theta_x)=2\lambda_x+\sum_{h=1}^mY_h(\theta_x)M_h(x)$.}
\end{align}
Then there exists an operator $D_{q,S}\in \A$ such that
$$
D_{q,S}(q_n)=\lambda_nq_n,\quad n\ge 0.
$$
Moreover, the operator $D_{q,S}$ is defined by
\begin{equation}\label{Dq}
D_{q,S}=\frac{1}{2}P_S(D_p)+\sum_{h=1}^m\tilde M_h(D_p)\D_hY_h(D_p),
\end{equation}
where $D_p\in \A$ is the operator for which the polynomials $(p_n)_n$ are eigenfunctions.
\end{theorem}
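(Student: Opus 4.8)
The plan is to expand the determinant \eqref{qus} along its first row, compute the action of the proposed operator $D_{q,S}$ on each $p_n$, and then match coefficients in the basis $(p_n)_n$. Write $q_n=\sum_{j=0}^m\beta_{n,j}p_{n-j}$, where $\beta_{n,j}$ is the $(1,j+1)$ minor of the matrix in \eqref{qus}; thus $\beta_{n,0}=\Omega(n)\ne0$ and each $\beta_{n,j}$ is a Casorati-type determinant built from the rows $u_l(r):=\xi_{n-r,m-r}^lY_l(\theta_{n-r})$, $l=1,\dots,m$. Using $R(D_p)p_n=R(\theta_n)p_n$ for any polynomial $R$, the defining formula \eqref{Dh}, the identity $\varepsilon_n^h\cdots\varepsilon_{n-j+1}^h=\xi_{n,j}^h$, and \eqref{ass1} to replace $\sigma^h_{n-j+1}\tilde M_h(\theta_{n-j})$ by $M_h(n-j)$, one computes
\[
D_{q,S}(p_n)=\frac{1}{2}P_S(\theta_n)p_n-\frac{1}{2}\sum_{h=1}^mY_h(\theta_n)M_h(n)p_n+\sum_{h=1}^mY_h(\theta_n)\sum_{j\ge1}(-1)^{j+1}M_h(n-j)\xi_{n,j}^hp_{n-j},
\]
and by \eqref{ass2} the first two terms collapse to $\lambda_np_n$, so $D_{q,S}(p_n)=\lambda_np_n+\sum_hY_h(\theta_n)\sum_{j\ge1}(-1)^{j+1}M_h(n-j)\xi_{n,j}^hp_{n-j}$.

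Summing this against the scalars $\beta_{n,j}$ and using the cocycle relation $\xi_{x,a+b}^h=\xi_{x,a}^h\xi_{x-a,b}^h$ (valid for all integers with the convention \eqref{defxi}), which yields $Y_h(\theta_{n-i})\xi_{n-i,k-i}^h=u_h(i)/\xi_{n-k,m-k}^h$, one obtains $D_{q,S}(q_n)-\lambda_nq_n=\sum_{k\ge1}c_{n,k}p_{n-k}$ with
\[
c_{n,k}=\beta_{n,k}(\lambda_{n-k}-\lambda_n)+(-1)^{k+1}\sum_{h=1}^m\frac{M_h(n-k)}{\xi_{n-k,m-k}^h}\sum_{i=0}^{\min(k-1,m)}(-1)^i\beta_{n,i}u_h(i)
\]
(with $\beta_{n,k}:=0$ for $k>m$), and the theorem amounts to $c_{n,k}=0$ for every $k\ge1$. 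For each $h$, the alternating sum $\sum_{i=0}^m(-1)^i\beta_{n,i}u_h(i)$ is the first-row cofactor expansion of the $(m+1)\times(m+1)$ determinant whose top row is the $h$-th Casorati row $(u_h(0),\dots,u_h(m))$ and whose remaining rows are $u_1,\dots,u_m$; this determinant has a repeated row, hence vanishes. For $k>m$ the inner sum in $c_{n,k}$ is precisely this vanishing sum, so $c_{n,k}=0$; this handles all coefficients of $p_{n-k}$ with $k>m$. The same vanishing lets us rewrite, for $1\le k\le m$, $\sum_{i=0}^{k-1}(-1)^i\beta_{n,i}u_h(i)=-\sum_{i=k}^m(-1)^i\beta_{n,i}u_h(i)$, so that $c_{n,k}=0$ becomes the determinantal identity
\[
(-1)^k\sum_{h=1}^m\frac{M_h(n-k)}{\xi_{n-k,m-k}^h}\sum_{i=k}^m(-1)^i\beta_{n,i}u_h(i)=(\lambda_n-\lambda_{n-k})\beta_{n,k},\qquad 1\le k\le m.
\]

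This last identity is the heart of the proof and where I expect the real work to lie; everything up to here is bookkeeping. To attack it I would substitute the telescoped value $\lambda_n-\lambda_{n-k}=\sum_{t=1}^kS(n-t+1)\Omega(n-t+1)$ coming from \eqref{lamd} (each summand, hence the sum, being a polynomial by \eqref{ass0}) on the right-hand side, and the explicit expression \eqref{emeiexp} for $M_h(n-k)$ on the left; after clearing the $\xi$'s with the cocycle relation, both sides become sums of products of complementary minors of the array $(u_l(r))$ and of its shifts, indexed by the index sets that appear in \eqref{emeiexp}, and the matching should then reduce to a Sylvester / Desnanot--Jacobi (Dodgson condensation) type relation among those minors, which one proves by induction on $m$ (or on $k$) — the combinatorial shape of $M_h$ in \eqref{emeiexp} being tailored exactly so that this works. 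Finally, $D_{q,S}\in\A$ is automatic: $P_S$, the $\tilde M_h$ and the $Y_h$ are polynomials, $D_p$ and the $\D_h$ lie in $\A$, and $\A$ is an algebra, so the formula \eqref{Dq} produces an operator of the required type; note also that the additive constant left free by \eqref{lamd} in $\lambda_x$ merely shifts $P_S$, the eigenvalue $\lambda_n$ and $D_{q,S}$ consistently, so it is harmless.
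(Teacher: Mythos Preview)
Your proposal follows essentially the same route as the paper's own proof: both compute $D_{q,S}(p_n)$ explicitly, arrive at the expansion $D_{q,S}(p_n)=\lambda_np_n+\sum_{j\ge1}(-1)^{j+1}\lambda_{n,j}\,p_{n-j}$ with $\lambda_{n,j}=\sum_h\xi_{n,j}^hM_h(n-j)Y_h(\theta_n)$, and then reduce $D_{q,S}(q_n)=\lambda_nq_n$ to a family of Casorati--minor identities. The paper at this point simply says ``follow exactly the same lines as the proof in Section~8 of \cite{ddI} (from formula (8.4) on)''; you instead spell out the repeated-row argument that eliminates the coefficients $c_{n,k}$ for $k>m$ and isolate the core identity for $1\le k\le m$, which is precisely the content hidden behind the paper's citation.

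One caveat: your final paragraph is only a sketch. The reduction you propose (telescoping $\lambda_n-\lambda_{n-k}$ via \eqref{lamd}, substituting \eqref{emeiexp}, and matching minors through a Sylvester/Desnanot--Jacobi mechanism) is plausible and in the right spirit, but you have not actually carried it out, and the shape of the identity is not \emph{literally} a single condensation step --- in \cite{ddI} it is handled by a direct column-by-column comparison of the minors $\Omega_{i,j}$ introduced in the Remark following the theorem, rather than by an off-the-shelf Sylvester identity. So the approach is the same as the paper's, your bookkeeping is correct, but the ``heart'' you correctly identify still requires the explicit argument of \cite{ddI} (or an equivalent one) to close.
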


\begin{proof}
The definition of \eqref{Dh}, \eqref{lamd} and \eqref{Dq}, along with the hypotheses \eqref{ass0}-\eqref{ass2}, gives
\begin{align*}
D_{q,S}(p_n)=&\left(\frac{1}{2}P_S(\theta_n)-\frac{1}{2}\sum_{h=1}^m\sigma_{n+1}^h\tilde{M}_h(\theta_n)Y_h(\theta_n)\right)p_n\\
&\qquad+\sum_{h=1}^m\sum_{j=1}^n(-1)^{j+1}\tilde{M}_h(\theta_{n-j})\varepsilon_{n}^h\cdots\varepsilon_{n-j+1}^h\sigma_{n-j+1}^hY_h(\theta_n)p_{n-j}\\
=&\lambda_np_n+\lambda_{n,1}p_{n-1}-\lambda_{n,2}p_{n-2}+\cdots+(-1)^{m+1}\lambda_{n,m}p_{n-m}\\ &\hspace{4cm}
+\sum_{j=m+1}^n(-1)^{j+1}\lambda_{n,j}p_{n-j}
\end{align*}
where
\begin{equation*}\label{lambj}
\lambda_{n,j}=\sum_{h=1}^m\varepsilon_{n}^h\cdots\varepsilon_{n-j+1}^hM_h(n-j)Y_h(\theta_n),\quad j=1,2,\ldots,n.
\end{equation*}
Now we just have to follow exactly the same lines as the proof in Section 8 of \cite{ddI} (from the formula (8.4) on).
\end{proof}

\begin{remark}
We now see that the polynomial $P_S$ (\ref{ass2}) also satisfies
\begin{equation}\label{Pdiff}
P_S(\theta_x)-P_S(\theta_{x-1})=S(x)\Omega(x)+S(x+m)\Omega(x+m).
\end{equation}
Indeed, $P_S$ defined in \eqref{ass2} can be written, using (8.4) and (8.6) in the proof of Theorem 3.2 of \cite{ddI}, as
$$
P_S(\theta_x)=2\lambda_x+\sum_{h=1}^mS(x+h)\Omega_{-h+1,h}(x+1),
$$
where $\Omega_{i,j}(x)$ is the $m\times m$ determinant defined in the same way as $\Omega (x-i)$ but replacing the $j$-th column of $\Omega (x-i)$ by the column vector
$$
\left(\xi_{x-1,m+i-1}^1Y_1(\theta_{x-1}),\ldots ,\xi_{x-1,m+i-1}^mY_m(\theta_{x-1})\right)^t.
$$
In particular $\Omega_{0,1}(x)=\Omega(x)$.

Now, using \eqref{lamd} and the relations (which easily follows by definition)
$$
\Omega_{-h+1,h}(x+1)=\Omega_{-h,h+1}(x),\quad h=1,\ldots,m-1,
$$
and
$$
\Omega_{-m+1,m}(x+1)=\Omega_{0,1}(x+m)=\Omega(x+m),
$$
we obtain \eqref{Pdiff}.
\end{remark}

\begin{remark}
For the particular cases of Laguerre, Jacobi or Askey-Wilson polynomials, one can found Casorati determinants similar to (\ref{qus}) in \cite{GrHH}, \cite{GrY}, \cite{HP}, \cite{Plamen1} or \cite{Plamen2}.
\end{remark}

In Sections \ref{SEC4} and \ref{sch},  we will apply Theorem \ref{Teor1} to the Hahn polynomials. We will see there that
the degree of the polynomial $P_S$ (see \eqref{ass2}) gives the order of the difference operator $D_{q,S}$ \eqref{Dq} with respect to which the new polynomials $(q_n)_n$ are eigenfunctions. This will be a consequence of the following Lemma:

\begin{lemma}\label{lgp2} With the same notation as in Theorem \ref{Teor1}, write
$$
\Psi_j^h(x)=\xi_{x-j,m-j}^hS(x)
\det\left(\xi_{x-r,m-r}^{l}Y_l(\theta_{x-r})\right)_{l\in \II_h;r\in \II_j}, \quad h,j=1,\ldots , m,
$$
and $\Omega _g^h$, $h=1,\ldots , m$, $g=0,1,2,\ldots$, for the particular case of $\Omega$ when $Y_h(x)=x^g$.
Assume that $\theta_x$ is a polynomial in $x$ of degree $2$ and that $\Psi_j^h$, $h,j=1,\ldots , m$, are polynomials in $x$ and write $\tilde d=\max\{\deg \Psi_j^h:h,j=1,\ldots , m\}$.
Then $M_h$ and $S\Omega _g^h$, $h=1,\ldots , m$, $g=0,1,2,\ldots$, are also polynomials in $x$. If, in addition, we assume that the degree of $S\Omega _g^h$ is at most $2g+\deg (S\Omega_0^h)$ for $h=1,\ldots, m,$ and $g\le \tilde d-\deg (S\Omega_0^h)$, then $M_h$ is a polynomial of degree at most $\deg (S\Omega_0^h)$.
\end{lemma}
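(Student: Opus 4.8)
The plan is to re-express both $M_h$ and $S\Omega_g^h$ through one family of auxiliary polynomials $R_t$, and then to reduce the degree estimate to a triangular linear system.

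\emph{Master formulas.} Comparing the definition \eqref{emeiexp} of $M_h$ with that of $\Psi_j^h$ shows at once that $M_h(x)=\sum_{j=1}^m(-1)^{h+j}\Psi_j^h(x+j)$. On the other hand, the Laplace expansion of $\Omega$ along its $h$-th row --- whose $(h,j)$ minor is $\Psi_j^h(x)/\bigl(\xi_{x-j,m-j}^hS(x)\bigr)$ and does not involve $Y_h$ --- gives $S(x)\Omega(x)=\sum_{j=1}^m(-1)^{h+j}Y_h(\theta_{x-j})\Psi_j^h(x)$, and therefore $S(x)\Omega_g^h(x)=\sum_{j=1}^m(-1)^{h+j}\theta_{x-j}^g\Psi_j^h(x)$. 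Since the $\Psi_j^h$ are polynomials and $\theta_x$ is a polynomial, $M_h$ and $S\Omega_g^h$ are polynomials in $x$; this proves the first assertion.

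\emph{Reduction.} Set $R_t(x)=\sum_{j=1}^m(-1)^{h+j}j^t\Psi_j^h(x)$. Then $R_0=S\Omega_0^h$, so $D:=\deg S\Omega_0^h=\deg R_0$, and $\deg R_t\le\tilde d$ for all $t$. Taylor-expanding $\Psi_j^h(x+j)$ in powers of $j$ gives $M_h=\sum_{i\ge 0}\frac1{i!}R_i^{(i)}$. On the other hand, since $\deg\theta_x=2$ we may write $\theta_{x-j}=\theta_x-j\theta'_x+\kappa j^2$, where $\kappa$ is the leading coefficient of $\theta_x$ and $\theta'_x$ its derivative in $x$; expanding the $g$-th power in powers of $j$ gives $S\Omega_g^h=\sum_{t=0}^{2g}A_{g,t}R_t$, where $A_{g,t}(x)$ is the coefficient of $u^t$ in $(\theta_x-\theta'_xu+\kappa u^2)^g$. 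Replacing $\theta_x,\theta'_x$ by their leading terms turns this polynomial into $\kappa^g(x-u)^{2g}$, so $A_{g,t}$ has degree exactly $2g-t$ with leading coefficient $(-1)^t\binom{2g}{t}\kappa^g\neq 0$. Consequently, if $\deg R_t\le D+t$ for every $t$, then $\deg R_i^{(i)}\le D$ for every $i$, hence $\deg M_h\le D$, which is the assertion to be proved.

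\emph{Degrees of the $R_t$ (the main point).} Put $K:=\tilde d-D$. The bound $\deg R_t\le D+t$ is automatic for $t\ge K$ (as $\deg R_t\le\tilde d$) and trivial for $t=0$, so what must be shown is that the coefficient of $x^{D+t+a}$ in $R_t$ vanishes whenever $1\le t$ and $1\le a\le K-t$; I would prove this by downward induction on $a$. Fix $1\le a\le K-1$ and $1\le g\le K-a$. The coefficient of $x^{2g+D+a}$ in $S\Omega_g^h=\sum_tA_{g,t}R_t$ is $0$, since $\deg S\Omega_g^h\le 2g+D<2g+D+a$. Because $\deg A_{g,t}=2g-t$, a contribution of $A_{g,t}R_t$ to $x^{2g+D+a}$ can only involve the coefficient of $x^{D+t+a'}$ in $R_t$ with $a'\ge a$ (in particular $t=0$ contributes nothing, as $\deg R_0=D$, and necessarily $t\le K-a$); those with $a'>a$ vanish by the inductive hypothesis, and the one with $a'=a$ is multiplied by the leading coefficient of $A_{g,t}$. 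Hence, for $g=1,\dots,K-a$,
\[
0=\sum_{t=1}^{K-a}(-1)^t\binom{2g}{t}\kappa^g\bigl[\,\text{coefficient of }x^{D+t+a}\text{ in }R_t\,\bigr].
\]
The crux is that the matrix $\bigl(\binom{2g}{t}\bigr)_{g,t=1}^{K-a}$ is invertible: for fixed $t$, $\binom{2g}{t}$ is a polynomial in $g$ of degree $t$ which vanishes at $g=0$, so its columns are the values at $g=1,\dots,K-a$ of a basis of the space of such polynomials, and no nonzero polynomial of degree $\le K-a$ that vanishes at $0$ can also vanish at $1,\dots,K-a$. This forces all the coefficients of $x^{D+t+a}$ in $R_t$ ($t=1,\dots,K-a$) to vanish, closing the induction (the case $K\le 1$ being vacuous). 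I expect this last step --- disentangling, in the coupled identity $S\Omega_g^h=\sum_tA_{g,t}R_t$, the top coefficients of the various $R_t$ --- to be the only genuine difficulty; the downward induction on $a$ together with the invertibility of $\bigl(\binom{2g}{t}\bigr)$ is exactly what handles it.
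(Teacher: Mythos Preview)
Your argument is correct. The two master formulas $M_h(x)=\sum_j(-1)^{h+j}\Psi_j^h(x+j)$ and $S\Omega_g^h(x)=\sum_j(-1)^{h+j}\theta_{x-j}^g\Psi_j^h(x)$ are exactly the right way to organize the computation, and your degree argument goes through: the identity $\theta_{x-u}^g=\sum_t A_{g,t}(x)u^t$ with $\deg A_{g,t}=2g-t$ and leading coefficient $(-1)^t\binom{2g}{t}\kappa^g$ follows cleanly from writing $\theta_{x-u}$ as a polynomial in $(x-u)$; the downward induction on $a$ is set up properly (the base case $a=K-1$ has no ``$a'>a$'' terms because those would require $[x^j]R_t$ with $j>D+K\ge\deg R_t$); and the invertibility of $\bigl(\binom{2g}{t}\bigr)_{g,t=1}^{K-a}$ is exactly the observation that $g\mapsto\binom{2g}{t}$, $t=1,\dots,K-a$, are a basis of the polynomials of degree $\le K-a$ vanishing at $0$, evaluated at the $K-a$ nonzero points $1,\dots,K-a$.

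The paper itself gives no proof here, deferring to Lemma~3.2 of \cite{dudh}. Your route --- introducing the auxiliary polynomials $R_t=\sum_j(-1)^{h+j}j^t\Psi_j^h$, expressing both $M_h$ (via Taylor expansion) and $S\Omega_g^h$ (via expansion of $\theta_{x-j}^g$) through them, and then peeling off the top coefficients of the $R_t$ by the triangular system coming from the hypothesis on $\deg S\Omega_g^h$ --- is precisely the natural strategy, and is in the same spirit as the argument in the cited reference. One cosmetic remark: it is worth stating explicitly that the Taylor sum $M_h=\sum_{i\ge0}\tfrac{1}{i!}R_i^{(i)}$ is finite (since $\deg R_i\le\tilde d$ forces $R_i^{(i)}=0$ for $i>\tilde d$), so that the degree bound $\deg M_h\le D$ follows immediately from $\deg R_i\le D+i$.
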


\begin{proof}
It is analogous to the proof of Lemma 3.2 in \cite{dudh}.
\end{proof}

\section{Two more ingredients}\label{SEC3}

In this Section we will assume that the polynomials $(p_n)_n$, $p_n$ of degree $n$, $n\ge 0$, are orthogonal with respect to a moment functional $\rho$. We automatically have that the polynomials $(p_n)_n$ satisfy the three-term recurrence relation ($p_{-1}=0$)
\begin{equation}\label{ttrr}
x p_n(x)=a_{n+1}p_{n+1}(x)+b_np_n(x)+c_np_{n-1}(x), \quad n\ge 0.
\end{equation}
The measure $\rho$ might be degenerate, in which case for some $n_0$ we might have $a_{n_0}c_{n_0}=0$.

The goal of this Section is to show how to choose appropriately the polynomials $Y_1,\ldots ,Y_m,$ in order that the polynomials $(q_n)_n$ (\ref{qus}) are also orthogonal with respect to certain measure. To stress the dependence of the polynomials $(q_n)_n$  on the polynomials $Y_1,\ldots ,Y_m$, we write $q_n=\Cas_n^{Y_1,\ldots ,Y_m}$.

\subsection{When are the polynomials $(\Cas_n^{Y_1,\ldots ,Y_m})_n$ orthogonal?} \label{ssi}
Only for a convenient choice of the polynomials $Y_j$, $j=1,\ldots, m$, the polynomials $(q_n)_n$ (\ref{qus}) are also orthogonal with respect to a measure. In Section 4 of \cite{ddI}, a method to check the orthogonality of the polynomials $(q_n)_n$ (\ref{qus}) was provided.
This tool assumed that the sequences $(\varepsilon_{n}^{h})_{n\in \ZZ }$, $h=1,\ldots ,m$, do not vanish for any $n$. This is not the case for the Hahn polynomials, hence we use the modified version of that tool given in \cite{dudh}
which includes also that case.

Indeed, given the sequences (in the integers) of the three-term recurrence relation $(a_n)_{n\in \ZZ }$, $(b_n)_{n\in \ZZ }$, $(c_n)_{n\in \ZZ }$ (see \eqref{ttrr} ) and
$(\varepsilon_{n}^{h})_{n\in \ZZ }$, $h=1,\ldots ,m$, assume we have for each $j\ge 0$ and $h=1,\ldots ,m$, one more sequence denoted by $(Z_{j}^{h}(n))_{n\in \ZZ }$ satisfying
\begin{equation}\label{relaR}
\varepsilon_{n+1}^{h}a_{n+1}Z_{j}^{h}(n+1)-b_nZ_{j}^{h}(n)+\frac{c_n}{\varepsilon_{n}^{h}}Z_{j}^{h}(n-1)=(\eta_hj+\kappa _h)Z_{j}^{h}(n),\quad n\in \ZZ,
\end{equation}
where, $\eta_h$ and $\kappa _h$ are real numbers independent of $n$ and $j$, and we assume that if for some $n_0$ and $h_0$, $\varepsilon_{n_0}^{h_0}=0$, then also $c_{n_0}=0$ and there is a number $d_{n_0}^{h_0}$  such that the identity (\ref{relaR}) still holds when we replace $c_{n_0}/\varepsilon_{n_0}^{h_0}$ by $d_{n_0}^{h_0}$.

Define the auxiliary numbers $\xi _{n,i}^h$, $i\ge 0$, $n\in \ZZ $ and $h=1,\ldots ,m$, by
\begin{equation*}\label{defxi2}
\xi_{n,i}^h=\prod_{j=n-i+1}^{n}\varepsilon_{j}^{h},\quad i\ge 1,\quad \quad \xi_{n,0}^h=1,\quad\quad \xi_{n,i}^h=\frac{1}{\xi_{n-i,-i}^h},\quad i\leq-1.
\end{equation*}
For $i<0$, we take $\xi_{n,i}^h=\infty$ if $\xi_{n-i,-i}^h=0$ (this can happen if for some $n_0,h_0$, $\varepsilon_{n_0}^{h_0}=0$).
However, by assuming $\varepsilon_n^h\not =0$, $n\le 0$, $h=1,\ldots , m$, one can straightforwardly check that $\xi_{n,n+1}^h$ is finite for $n\le 0$.
Notice that for $x=n$, the number $\xi_{n,i}^h$ coincides with the number defined by (\ref{defxi}).


Given a $m$-tuple $G$ of $m$ positive integers, $G=(g_1,\ldots , g_m)$, assume that the $m$ numbers
\begin{equation}\label{diffn}
\tilde g_h=(\eta_hg_h+\kappa _h),\quad h=1,\ldots ,m,
\end{equation}
are different. Call $\tilde G=\{ \tilde g_1,\ldots , \tilde g_m\}$. Consider finally the $m\times m$ Casorati determinant $\Omega _G$ defined by
\begin{equation*}\label{casort}
\Omega_G (x)=\det \left(\xi_{x,m-j}^lZ_{g_l}^l(\theta_{x-j})\right)_{l,j=1}^m.
\end{equation*}

We then define the sequence of polynomials $(q_n^G)_n$ by
\begin{equation}\label{quso}
q_n^G(x)=\begin{vmatrix}
p_n(x) & -p_{n-1}(x) & \cdots & (-1)^mp_{n-m}(x) \\
\displaystyle\xi_{n,m}^1Z_{g_1}^1(\theta_{n}) & \xi_{n-1,m-1}^1Z_{g_1}^1(\theta_{n-1}) & \cdots &
Z_{g_1}^1(\theta_{n-m}) \\
               \vdots & \vdots & \ddots & \vdots \\
              \xi_{n,m}^mZ_{g_m}^m(\theta_{n}) & \displaystyle
               \xi_{n-1,m-1}^mZ_{g_m}^m(\theta_{n-1}) & \cdots &Z_{g_m}^m(\theta_{n-m})
             \end{vmatrix}.
\end{equation}
Notice that if for each $h=1,\ldots , m$,  $Y_h(x)=Z_{g_h}^h(x)$ is a polynomial in $x$, satisfying the hypotheses of Theorem \ref{Teor1}, then the polynomials $q_n^G$ (\ref{quso})
fit into the definition of the polynomials (\ref{qus})  in Theorem \ref{Teor1}, and hence they are eigenfunctions of an operator in the algebra $\A$.

The key to prove that the polynomials $(q_n^G)_n$ are orthogonal with respect to a measure $\tilde \rho $ are the following formulas.
Assume that $\varepsilon_n^h\not =0$, $n\le 0$, $h=1,\ldots , m,$ and that there exists a constant $c_G\not =0$ such that
\begin{align}\label{foeq1}
\langle \tilde \rho,p_n\rangle &=(-1)^n c_G\sum_{i=1}^{m}\frac{\xi_{n,n+1}^i Z_{g_i}^i(\theta_{n})}{\p _{\tilde G}'(\tilde g_i)Z_{g_i}^{i}(\theta_{-1})},\quad n\ge 0,\\
\label{foeq2}
0&=\sum_{i=1}^{m}\frac{Z_{g_i}^i(\theta_{n})}{\p _{\tilde G}'(\tilde g_i)\xi_{-1,-n-1}^iZ_{g_i}^{i}(\theta_{-1})},\quad 1-m\le n<0,\\
\label{foeq3}
0&\not =\sum_{i=1}^{m}\frac{Z_{g_i}^i(\theta_{-m})}{\p _{\tilde G}'(\tilde g_i)\xi_{-1,m-1}^iZ_{g_i}^{i}(\theta_{-1})},
\end{align}
where $\tilde g_h$ are the $m$ different numbers (\ref{diffn}) and $\p_{\tilde G}(x)=\prod_{i=1}^m(x-\tilde g_i)$.

We then have the following version of Lemma 4.2 of \cite{ddI}:

\begin{lemma}[Lemma 3.4 of \cite{dudh}] \label{lort2} Assume that $\varepsilon_n^h\not =0$, $n\le 0$, $h=1,\ldots , m$, and that there exist $M,N$ (each of them can be either a positive integer or infinity) such that $a_{n}c_n\not =0$ for $1\le n\le N$ and
$\Omega_G (n)\not =0$ for $0\le n\le M$. Assume also that (\ref{foeq1}), (\ref{foeq2}) and (\ref{foeq3}) hold.
Then the polynomials $q_n^G$, $0\le n\le \min\{M-1,N+m\}$, are orthogonal with respect to $\tilde \rho$ and have non-null norms.
\end{lemma}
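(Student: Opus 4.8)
The plan is to produce $\tilde\rho$ explicitly and then check the two conditions that characterise an orthogonal sequence. I would \emph{define} the moment functional $\tilde\rho$ on $\PP$ by linearity, declaring the right-hand side of \eqref{foeq1} to be the value $\langle\tilde\rho,p_n\rangle$ for every $n\ge0$; since $(p_n)_n$ is a basis of $\PP$ this determines $\tilde\rho$ uniquely, and \eqref{foeq1} then holds by construction. Granted that $\deg q_n^G=n$ on the range in question (dealt with below), it suffices to prove, for $0\le n\le\min\{M-1,N+m\}$, that $\langle\tilde\rho,q_n^G\,p_k\rangle=0$ for $0\le k<n$ and $\langle\tilde\rho,q_n^G\,p_n\rangle\ne0$. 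Indeed, the first statement gives $\langle\tilde\rho,q_n^Gq_k^G\rangle=0$ for $k<n$ (each $q_k^G$ lies in the span of $p_0,\dots,p_k$) and, by symmetry of the bilinear form, for $k>n$ as well; combined with the second statement and $\deg q_n^G=n$ it yields $\langle\tilde\rho,(q_n^G)^2\rangle=\beta_{n,0}\langle\tilde\rho,q_n^G p_n\rangle\ne0$, i.e.\ non-null norms.

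First I would record that expanding the determinant \eqref{quso} along its first row gives $q_n^G(x)=\sum_{j=0}^m\beta_{n,j}\,p_{n-j}(x)$, where the $\beta_{n,j}$ are $x$-independent $m\times m$ minors of the Casorati matrix and $\beta_{n,0}=\Omega_G(n)$. Hence $\deg q_n^G=n$ exactly where $\Omega_G(n)\ne0$ (guaranteed by $n\le M-1$) and where the $p_n$ are genuinely of degree $n$ (guaranteed by $a_nc_n\ne0$ for $n\le N$). The refinement of \cite{dudh} over \cite{ddI} is precisely what keeps all of this meaningful when some $\varepsilon_{n_0}^{h_0}=0$: then $c_{n_0}=0$, the would-be poles of the $\xi^h$'s cancel against accompanying zeros in \eqref{quso}, and the value $d_{n_0}^{h_0}$ plays the role of $c_{n_0}/\varepsilon_{n_0}^{h_0}$ throughout.

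The heart of the argument is the evaluation of $\langle\tilde\rho,q_n^G p_k\rangle=\sum_{j=0}^m\beta_{n,j}\langle\tilde\rho,p_{n-j}p_k\rangle$. Using the three-term recurrence \eqref{ttrr} repeatedly to linearise each $p_{n-j}p_k$ and then substituting \eqref{foeq1}, one obtains a multiple sum indexed by $i\in\{1,\dots,m\}$, by $j$, and by a ``resolved'' index $s$; in the sum over $s$ there appears $(-1)^s\cdot(\text{products of }a_\bullet,b_\bullet,c_\bullet)\cdot\xi_{s,s+1}^i Z_{g_i}^i(\theta_s)$, and at this point one invokes \eqref{relaR}: it says exactly that the sequence $n\mapsto Z_{g_i}^i(\theta_n)$ is an eigensequence, with eigenvalue $\tilde g_i$, of the $\varepsilon^i$-twisted Jacobi operator attached to \eqref{ttrr}, so the linearisation telescopes and the $s$-sum collapses to a single evaluation of $\xi^i Z_{g_i}^i$ at one shifted index. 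After summing over $j$ against the minors $\beta_{n,j}$, what remains reassembles into a single $m\times m$ Casorati-type determinant in the rows $\bigl(\xi^l_\bullet Z_{g_l}^l(\theta_\bullet)\bigr)_l$: for $k<n$ this determinant has two linearly dependent columns — the column manufactured by the collapse already occurs in the matrix defining the $\beta_{n,j}$ — so it vanishes; for $k=n$ it equals a nonzero scalar times $\Omega_G(n)$ together with the genuinely nonzero boundary quantity of \eqref{foeq3}. The telescoping also throws off boundary terms at the indices $s=-1,\dots,-m+1$, and these vanish precisely because of \eqref{foeq2}; tracking how far the linearisation-plus-telescoping reaches downward and upward is what forces $n+1\le M$ and $n\le N+m$, hence the bound $\min\{M-1,N+m\}$.

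The step I expect to be the main obstacle is this determinantal collapse: turning the multi-indexed sum produced by \eqref{foeq1}, \eqref{ttrr} and \eqref{relaR} into a bona fide Casorati determinant and identifying, in each case $k\le n$, which column is redundant (case $k<n$) or which boundary evaluation makes the value nonzero (case $k=n$). This is where every hypothesis is consumed, and it is the analogue of the bookkeeping carried out around formulas (8.4)--(8.6) of \cite{ddI} and in the proof of Lemma~3.4 of \cite{dudh}; once the adjustments for vanishing $\varepsilon_{n_0}^{h_0}$ are in place, the computation follows those references essentially line by line.
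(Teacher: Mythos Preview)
The paper does not give its own proof of this lemma: it is quoted verbatim as Lemma~3.4 of \cite{dudh} and used as a black box in the proof of Theorem~\ref{t6.2}. So there is nothing in the present paper to compare your sketch against. Your outline is broadly consistent with how the argument in \cite{dudh} (and its predecessor, Lemma~4.2 of \cite{ddI}) runs: expand $q_n^G$ along the first row, feed in \eqref{foeq1}, and use \eqref{relaR} to collapse the recurrence contributions, with \eqref{foeq2}--\eqref{foeq3} handling the boundary terms and the non-vanishing of the norm. One small point: $\tilde\rho$ is part of the hypotheses, not something you define; \eqref{foeq1} is an assumed identity for a given $\tilde\rho$, not a definition, though of course it pins down all the moments you need.
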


\subsection{Finite sets of positive integers}
We still need a  last ingredient for identifying the measure $\tilde \rho$ with respect to which the polynomials $(q_n^G)_n$ (\ref{quso}) are orthogonal.
The measures $\rho^{\F}_{a,b,N}$ (\ref{udspmi}) in the Introduction  depends on certain finite sets $F_1, F_2, F_3$ and $F_4$ while the polynomials $(q_n^G)_n$ depend on the finite set $G$ (the degrees of the polynomials $Z$'s). The relationship between the sets $F$'s and $G$ will be given by the following transforms of finite sets of positive integers.

Consider the sets $\Upsilon$ and $\Upsilon _0$ formed by all finite sets of positive or nonnegative integers, respectively:
\begin{align*}
\Upsilon&=\{F:\mbox{$F$ is a finite set of positive integers}\} ,\\
\Upsilon_0&=\{F:\mbox{$F$ is a finite set of nonnegative integers}\} .
\end{align*}
We consider an involution $I$ in $\Upsilon$, and a family $J_h$, $h\ge 1$, of transforms from  $\Upsilon$ into  $\Upsilon _0$. For $F\in \Upsilon$ write $F=\{f_1,\ldots ,f_k\}$ with $f_i<f_{i+1}$, so that $f_k=\max F$. Then $I(F)$ and $J_h(F)$, $h\ge 1$, are defined by
\begin{align}\label{dinv}
I(F)&=\{1,2,\ldots, f_k\}\setminus \{f_k-f,f\in F\},\\ \label{dinv2}
J_h(F)&=\{0,1,2,\ldots, f_k+h-1\}\setminus \{f-1,f\in F\}.
\end{align}

For the involution $I$, the bigger the holes in $F$ (with respect to the set $\{1,2,\ldots , f_k\}$), the bigger the involuted set $I(F)$.
Here it is a couple of examples
$$
I(\{ 1,2,3,\ldots ,k\})=\{ k\},\quad \quad I(\{1, k\})=\{ 1,2,\ldots, k-2, k\}.
$$
Something similar happens for the transform $J_h$ with respect to $\{0,1,\ldots , f_k+h-1\}$.

Notice that
$$
\max F=\max I(F), \quad h-1+\max F=\max J_h(F),
$$
and if $n_F$ denotes the cardinal of $F$, we also have
\begin{equation}\label{neci}
n_{I(F)}=f_k-n_F+1,\quad n_{J_h(F)}=f_k+h-n_F.
\end{equation}

For a quartet $\mathcal F=(F_1,F_2,F_3,F_4)$ of finite sets of positive integers, we will write $F_i=\{f_1^{i\rceil},\ldots,f_{n_F}^{i\rceil}\}, i=1,2,3,4,$ with $f_j^{i\rceil}<f_{j+1}^{i\rceil}$ (the use, for instance, of $f_j^2$ to describe elements of $F_2$ is confusing because it looks like a square. This is the reason why we use the notation $f_j^{i\rceil}$).

\section{Hahn polynomials}\label{SEC4}
We start with some basic definitions and facts about Hahn and dual Hahn  polynomials, which we will need later.
For $a,a+b+1,a+b+N+1\neq-1,-2,\ldots$ we write $(h_n^{a,b,N})_n$ for the sequence of Hahn polynomials defined by
\begin{equation}\label{HP}
h_n^{a,b,N}(x)=\sum_{j=0}^n\frac{(-x)_j(N-n+1)_{n-j}(a+b+1)_{j+n}}{(2+a+b+N)_n(a+1)_j(n-j)!j!},\quad n\ge 0,
\end{equation}
(we use a different normalization from the one used in \cite{du1}, pp. 35. The equivalence is given by $\frac{(-1)^nn!(a+1)_n(2+a+b+N)_n}{(a+b+1)_{2n}}h_n^{a,b,N}(x)=\tilde h_n^{b+N+1,a+1,N+1}(x)$, where $(\tilde h_n^{\alpha,c,N})_n$ is the family used in \cite{du1}).

When $N$ is a positive integer then the polynomial $h_{n}^{a,b,N}(x)$ for $n\ge N+1$ is always divisible by $(-x)_{N+1}$. Hence
\begin{equation}\label{mdc1}
h_{n}^{a,b,N}(i)=0,\quad n\ge N+1,\quad i=0,\ldots, N.
\end{equation}
Hahn polynomials are eigenfunctions of the following second-order difference operator
\begin{equation}\label{dopHahn}
D_{a,b,N}=x(x-b-N-1)\Sh_{-1}-\big[(x+a+1)(x-N)+x(x-b-N-1)\big]\Sh_{0}+(x+a+1)(x-N)\Sh_{1}.
\end{equation}
That is
\begin{equation*}\label{autovH}
D_{a,b,N}(h_n^{a,b,N})=\theta_nh_n^{a,b,N},\quad\theta_n=n(n+a+b+1), \quad n\geq0.
\end{equation*}

They satisfy the following three-term recurrence formula ($h_{-1}^{a,b,N}=0$)
\begin{equation}\label{ttrrH}
    xh_n=a_{n+1}h_{n+1}+b_nh_n+c_nh_{n-1},\quad n\geq0,
\end{equation}
where\begin{align*}
       a_n &= -\frac{n(n+a)(n+a+b+N+1)}{(2n+a+b-1)(2n+a+b)}, \\
       b_n &= \frac{N(a+1)(a+b)+n(2N+b-a)(n+a+b+1)}{(2n+a+b)(2n+a+b+2)}, \\
       c_n &= -\frac{(n+a+b)(n+b)(N-n+1)}{(2n+a+b)(2n+a+b+1)}
     \end{align*}
(to simplify the notation we remove the parameters in some formulas).

Assume that $a,b,a+b,a+b+N+1\neq -1,-2,\ldots,$ and $N+1$ is not a positive integer, then the Hahn polynomials are always orthogonal with respect to a moment functional $\rho_{a,b,N}$ which we normalize by taking
$$
\langle\rho_{a,b,N},1\rangle=\frac{\Gamma(a+b+N+2)\Gamma(a+1)\Gamma(b+1)}{\Gamma(a+b+2)}.
$$
When $N+1$ is a positive integer, $a,b\neq -1,-2,\ldots,-N$, and $a+b\neq -1,-2,\ldots,-2N-1$, the first $N+1$ Hahn polynomials are orthogonal with respect to the Hahn measure
\begin{equation}\label{pesoH}
    \rho_{a,b,N}=N!\sum_{x=0}^N\frac{\Gamma(a+x+1)\Gamma(N-x+b+1)}{x!(N-x)!}\delta_x,
\end{equation}
and have non-null norms. The discrete measure $\rho_{a,b,N}$ is positive only when $a,b>-1$ or $a,b<-N$.

We also need the so-called dual Hahn polynomials, $a\not =-1,-2,\ldots $, $n\geq0$,
\begin{equation}\label{dHP}
   R_n^{a,b,N}(x)=\sum_{j=0}^n\frac{(-1)^j(-n)_j(-N+j)_{n-j}}{(a+1)_jj!}\prod_{i=0}^{j-1}[x-i(i+a+b+1)].
\end{equation}
Observe that $(-1)^j\prod_{i=0}^{j-1}(x(x+a+b+1)-i(a+b+1+i))=(-x)_j(x+a+b+1)_j$, therefore we have the duality
$$
R_x^{a,b,N}(n(n+a+b+1))=\frac{(-1)^nn!(N+a+b+2)_n(-N)_x}{(a+b+1)_n(-N)_n}h_n^{a,b,N}(x), \quad x,n\geq0.
$$

\medskip

Consider now the algebra of differential operators defined by \eqref{algdiffd}. There are 4 different $\D$-operators for the Hahn polynomials (see Lemma 7.2 of \cite{du1}).
They are defined by the sequences $(\varepsilon_{n,h})_n$ and $(\sigma_n)_n, h=1,2,3,4,$ given by

\begin{align}
\label{eps1}&\varepsilon_{n,1}=-\frac{n-N+1}{n+a+b+N+1},\quad&\sigma_{n}=-(2n+a+b-1),\\
\label{eps2}&\varepsilon_{n,2}=\frac{(n+b)(n-N+1)}{(n+a)(n+a+b+N+1)},\quad&\sigma_{n}=-(2n+a+b-1),\\
\label{eps3}&\varepsilon_{n,3}=1,\quad&\sigma_{n}=-(2n+a+b-1),\\
\label{eps4}&\varepsilon_{n,4}=-\frac{n+b}{n+a},\quad&\sigma_{n}=-(2n+a+b-1).
\end{align}
These sequences define four $\mathcal{D}$-operators (see \eqref{Dh}):
\begin{align}\label{dohh1}
&\mathcal{D}_1=\frac{a+b+1}{2}I+x\nabla,&\quad&\mathcal{D}_2=\frac{a+b+1}{2}I+(x-N)\Delta,\\\label{dohh2}
&\mathcal{D}_3=\frac{a+b+1}{2}I+(x+a+1)\Delta,&\quad&\mathcal{D}_4=\frac{a+b+1}{2}I+(x-b-N-1)\nabla.
\end{align}
$\Delta$ and $\nabla$ denote, as usual, the first-order difference operators:
$$
\Delta(f)=f(x+1)-f(x),\quad \nabla(f)=f(x)-f(x-1).
$$

Let us call $N_{x}^{h;j}$ and $D_{x}^{h;j}, h=1,2,$ the following functions:
\begin{align*}\label{ABDijH}
N_{x}^{1;j}&=(x-j+b+1)_j \quad &N_{x}^{2;j}&=(x-j-N)_j,\\
D_{x}^{1;j}&=(-1)^j(x-j+a+1)_j \quad &D_{x}^{2;j}&=(-1)^j(x-j+a+b+N+2)_j.
\end{align*}
We will use the following properties, which easily hold by definition
\begin{equation}\label{ABprop2}
N_{x-i}^{h;m-i}=N_{x-i}^{h;j}N_{x-i-j}^{h;m-i-j},\quad D_{x-i}^{h;m-i}=D_{x-i}^{h;j}D_{x-i-j}^{h;m-i-j},\quad h=1,2.
\end{equation}

Given a quartet $\U=(U_1,U_2,U_3,U_4)$  of finite sets of nonnegative integers, we write $m_j$
for the cardinal of $U_j$, $j=1,2,3,4$, $m=m_1+m_2+m_3+m_4$ and
\begin{align}\label{ppmm}
\UU_1&=\{1,\ldots, m_1\},\quad &\UU_2&=\{m_1+1,\ldots, m_1+m_2\},
\\\nonumber \UU_3&=\{m_1+m_2+1,\ldots, m_1+m_2+m_3\}, \quad &\UU_4&=\{m_1+m_2+m_3+1,\ldots, m\}.
\end{align}
We write $U_j=\{u_i^{j\rceil},i\in \UU_j\}$ (see the end of Section 4.2 for a justification of this notation).

The functions $\xi_{x,j}^h$ defined in \eqref{defxi} can be written as

\begin{equation}\label{xit}
\xi_{x,j}^h=\begin{cases} \displaystyle\frac{N_{x}^{2;j}}{D_{x}^{2;j}}=(-1)^j\frac{(x-j-N)_j}{(x-j+a+b+N+2)_j},& \mbox{for $h\in \UU_1$,}\\
\displaystyle\frac{N_{x}^{1;j}N_{x}^{2;j}}{D_{x}^{1;j}D_{x}^{2;j}}=\frac{(x-j+b+1)_j(x-j-N)_j}{(x-j+a+1)_j(x-j+a+b+N+2)_j},& \mbox{for $h\in \UU_2$,}\\
1,& \mbox{for $h\in \UU_3$,}\\
\displaystyle\frac{N_{x}^{1;j}}{D_{x}^{1;j}}=(-1)^j\frac{(x-j+b+1)_j}{(x-j+a+1)_j},& \mbox{for $h\in \UU_4$.}
 \end{cases}
\end{equation}

\medskip

In the rest of this Section, we will prove that the three hypotheses (\ref{ass0}), (\ref{ass1}) and (\ref{ass2}) in Theorem \ref{Teor1}
hold for the four $\D$-operators above associated to the Hahn polynomials. Hence, the polynomials $(q_n)_n$ defined by \eqref{qus} for the Hahn family will be consequently eigenfunctions of the higher-order difference operator \eqref{Dq}.

To check the first hypothesis (\ref{ass0}) in Theorem \ref{Teor1}, we will need the following lemma,
which it will be also useful to compute the order of the difference operator with respect to which the bispectral polynomials constructed in Section \ref{sch} will be eigenfunctions.

\begin{lemma}\label{lgp1} Let $Y_1, Y_2, \ldots, Y_m,$ be nonzero polynomials satisfying that $\deg Y_i=u_{i}^{j\rceil}$, if $i\in \UU_j$ and $1\le j\le 4$. Write $r_i$ for the leading coefficient of $Y_i$, $1\le i\le m$.
For real numbers $a,b, N$, consider the rational function $P$ defined by
\begin{equation}\label{def2p}
P(x)=\frac{\left|
  \begin{array}{@{}c@{}lccc@{}c@{}}
    &&&\hspace{-.9cm}{}_{j=1,\ldots,m} \\
    \dosfilas{N_{x-j}^{2;m-j}D_{x-1}^{2;j-1}Y_{i}(\theta _{x-j}) }{i\in \UU_1} \\
    \dosfilas{N_{x-j}^{1;m-j}N_{x-j}^{2;m-j}D_{x-1}^{1;j-1}D_{x-1}^{2;j-1}Y_{i}(\theta _{x-j}) }{i\in \UU_2}\\
     \dosfilas{Y_{i}(\theta _{x-j})}{i\in \UU_3}\\
    \dosfilas{N_{x-j}^{1;m-j}D_{x-1}^{1;j-1}Y_{i}(\theta _{x-j}) }{i\in \UU_4}
  \end{array}
   \hspace{-.4cm}\right|}{p(x)q(x)},
\end{equation}
where $p$ and $q$ are the polynomials
\begin{align}\label{def1p}
p(x)&=\prod_{i=1}^{m_2+m_4-1}N^{1;m_2+m_4-i}_{x-m_1-m_3-i}D^{1;m_2+m_4-i}_{x-1}\prod_{i=1}^{m_1+m_2-1}N^{2;m_1+m_2-i}_{x-m_3-m_4-i}
D^{2;m_1+m_2-i}_{x-1},\\
\label{def1q}q(x)&=(-1)^{\frac{m(m-1)}{2}}\prod_{p=1}^{m-1}\left(\prod_{s=1}^{p}\sigma_{x-m+\frac{s+p+1}{2}}\right).
\end{align}
The determinant \eqref{def2p} should be understood in the way explained in the Preliminaries (see \eqref{defdosf}). If
\begin{equation}\label{yas0}
z-u+b+N+1, w-v+a+N+1\not =0,
\end{equation}
for $u\in U_1,v\in U_2,w\in U_3,z\in U_4$, then $P$ is a polynomial of degree
\begin{equation*}\label{ddp}
d=2\sum_{u\in U_1,U_2,U_3,U_4}u-2\sum_{i=1}^4\binom{m_i}{2},
\end{equation*}
with leading coefficient given by
\begin{align*}\label{mspcl}
r&=(-1)^{\sum_{i=1}^{4}\binom{m_i}{2}+m_1m_2+m_2m_3+m_3m_4}V_{U_1}V_{U_2}V_{U_3}V_{U_4}\prod_{i=1}^mr_i
\\\nonumber&\qquad \times\prod_{v\in U_2,w\in U_3}(N+a+1-v+w)\prod_{u\in U_1,z\in U_4}(N+b+1-u+z),
\end{align*}
where $V_X$ denotes the Vandermonde determinant associated to the set $X=\{ x_1,x_2,\ldots,x_K\}$ defined by $V_X=\displaystyle\prod_{i<j}(x_j-x_i)$.
\end{lemma}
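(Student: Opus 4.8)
Write $\mathcal W(x)$ for the determinant in the numerator of \eqref{def2p}. The plan is to first bring $\mathcal W(x)$ close to the Casorati determinant $\Omega(x)$ of \eqref{casd1}, then to prove that $p(x)q(x)$ divides $\mathcal W(x)$ (so that $P$ is genuinely a polynomial), and finally to compute its degree and leading coefficient by reducing to monomial data. For the first step, the relations \eqref{ABprop2} give $D^{h;m-1}_{x-1}=D^{h;j-1}_{x-1}D^{h;m-j}_{x-j}$ (and similarly for $N^h$); combining this with the explicit forms \eqref{xit} of the $\xi^h$'s, one checks that the $(i,j)$ entry of $\mathcal W(x)$ is the $(i,j)$ entry of $\Omega(x)$ multiplied by $D^{2;m-1}_{x-1}$ when $i\in\UU_1$, by $D^{1;m-1}_{x-1}D^{2;m-1}_{x-1}$ when $i\in\UU_2$, by $1$ when $i\in\UU_3$, and by $D^{1;m-1}_{x-1}$ when $i\in\UU_4$. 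Since each of these factors depends on neither $i$ nor $j$, pulling them out of the rows yields $\mathcal W(x)=(D^{1;m-1}_{x-1})^{m_2+m_4}(D^{2;m-1}_{x-1})^{m_1+m_2}\,\Omega(x)$; as $\mathcal W(x)$ and $p(x)q(x)$ are manifestly polynomials, it remains to prove $p(x)q(x)\mid\mathcal W(x)$ and to identify the quotient.

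For the divisibility by $q(x)$: since $\sigma_n=-(2n+a+b-1)$ and each difference $\theta_{x-j}-\theta_{x-k}=(k-j)(2x+a+b+1-j-k)$ is linear in $x$, a short combinatorial computation identifies $q(x)$, up to a nonzero constant, with $\prod_{1\le j<k\le m}(\theta_{x-j}-\theta_{x-k})$. Hence it suffices to show that $\mathcal W(x)$ vanishes, to the appropriate order, at every point where $\theta_{x-j}=\theta_{x-k}$ for some $j<k$. At such a point the two arguments $x-j$ and $x-k$ are exchanged by the symmetry $n\mapsto -a-b-1-n$ of $\theta_n$, and from \eqref{xit} one verifies the duality identity that $\xi^l_{x-j,\,m-j}/\xi^l_{x-k,\,m-k}$ is then independent of $l$; consequently columns $j$ and $k$ of $\Omega(x)$ are proportional there and the determinant vanishes (the case in which several pairs collapse at the same point, i.e.\ $j+k=j'+k'$, giving higher vanishing orders, is handled in the same way). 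For the divisibility by $p(x)$: expanding $D^{h;i}_x=\prod_{s=1}^i D^{h;1}_{x-i+s}$ shows that the powers of $D^{h;m-1}_{x-1}$ produced above, together with the column denominators of $\Omega(x)$, leave exactly the polynomial $p(x)$ of \eqref{def1p} as a factor of $\mathcal W(x)$.

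For the degree and leading coefficient I would use that $\mathcal W(x)$ is multilinear in $Y_1,\dots,Y_m$ and first treat the monomial case $Y_i(x)=r_ix^{u_i^{j\rceil}}$ ($i\in\UU_j$). There the determinant factors block by block: within each block one obtains a generalized Vandermonde in the variables $\theta_{x-j}$ whose quotient by $\prod_{j<k}(\theta_{x-j}-\theta_{x-k})$ (equivalently, by $q(x)$ up to a constant) contributes the Vandermonde determinant $V_{U_j}$ of the degree set, while the interplay between the rows carrying the factor $N^1/D^1$ and those carrying $N^2/D^2$ produces Cauchy-type products equal to $\prod_{v\in U_2,w\in U_3}(N+a+1-v+w)$ and $\prod_{u\in U_1,z\in U_4}(N+b+1-u+z)$; keeping track of the signs (from reordering rows inside blocks and from the $(-1)$'s in the $D^h$'s) and of the powers of $2$ (which cancel between the leading coefficient of $\prod_{j<k}(\theta_{x-j}-\theta_{x-k})$ and that of $q(x)$) gives exactly the stated degree $d$ and leading coefficient $r$, the latter being nonzero precisely under hypothesis \eqref{yas0}. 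For a general $m$-tuple $(Y_i)_i$, in the multilinear expansion of $\mathcal W(x)$ every summand other than the one retaining all the leading monomials uses, in some block, a monomial of strictly smaller degree (or two equal monomials, in which case that summand vanishes), so by the monomial computation it contributes a polynomial of degree strictly below $d$; hence $\deg P\le d$ and the coefficient of $x^d$ in $P$ equals $r$.

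The step I expect to be the main obstacle is the divisibility by $q(x)$ — establishing the duality identity for the $\xi^l$'s at the coincidence points of $\theta$, and correctly handling the multiplicities when several index pairs coincide — together with the sign and constant bookkeeping in the block-by-block evaluation of the monomial determinant, where a naive degree count leaves a genuine gap and one must exploit the structure (many rows of $\mathcal W(x)$ having proportional leading terms) to pin down the exact degree.
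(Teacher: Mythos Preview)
The paper's own proof of this lemma is a one-line deferral to Lemma~3.3 of \cite{dudh}, so a line-by-line comparison is not possible; that said, your overall architecture---rewrite the numerator $\mathcal W(x)$ as $(D^{1;m-1}_{x-1})^{m_2+m_4}(D^{2;m-1}_{x-1})^{m_1+m_2}\Omega(x)$, prove that $p(x)$ and $q(x)$ divide it, and then reduce the degree/leading-coefficient computation to monomial $Y_i$'s---is the natural route and is almost certainly what \cite{dudh} does. Your first identity is correct and is exactly the manipulation carried out in \eqref{SOm} in the proof of Lemma~\ref{l5.2}; and the ``duality identity'' you anticipate for the $q(x)$-divisibility is precisely the involution relation \eqref{ABpropsH}, so that step can indeed be made rigorous along the lines you indicate.

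There is, however, a genuine gap in your $p(x)\mid\mathcal W(x)$ step. Clearing the column denominators of $\Omega(x)$ by the prefactors $(D^{h;m-1}_{x-1})^{\ldots}$ accounts only for the $D$-type factors of $p(x)$; the $N$-type factors $N^{1;m_2+m_4-i}_{x-m_1-m_3-i}$ and $N^{2;m_1+m_2-i}_{x-m_3-m_4-i}$ are \emph{not} produced by any denominator clearing and must divide the determinant for a structural reason. The point is that in every column $j$ the $m_2+m_4$ rows belonging to $\UU_2\cup\UU_4$ share the common factor $N^{1;m-j}_{x-j}D^{1;j-1}_{x-1}$ (and analogously the rows in $\UU_1\cup\UU_2$ share $N^{2;m-j}_{x-j}D^{2;j-1}_{x-1}$). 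Performing the Laplace expansion of $\mathcal W(x)$ along these $m_2+m_4$ rows, every summand acquires the product $\prod_{j\in J}N^{1;m-j}_{x-j}D^{1;j-1}_{x-1}$ for some $(m_2+m_4)$-subset $J\subset\{1,\ldots,m\}$; the greatest common divisor of these products over all $J$ is exactly $\prod_{i=1}^{m_2+m_4-1}N^{1;m_2+m_4-i}_{x-m_1-m_3-i}D^{1;m_2+m_4-i}_{x-1}$, which is the first half of $p(x)$. Without this Laplace/GCD argument (or an equivalent sequence of row operations), your sentence ``leave exactly the polynomial $p(x)$\ldots as a factor'' is an assertion, not a proof. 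Once this is in place, your monomial reduction for the degree and leading coefficient is the standard finish.
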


\begin{proof}
The Lemma can be proved using the same approach as in the proof of Lemma 3.3 in \cite{dudh}.
\end{proof}

\bigskip

Let us now introduce the key concept in order to check the hypotheses (\ref{ass1}) and (\ref{ass2}) in Theorem \ref{Teor1} for the Hahn polynomials. We define an \emph{involution} that characterizes the subring $\mathbb{R}[\theta_x]$ in $\mathbb{R}[x]$. This involution is given by
\begin{equation}\label{inv}
\I^{a+b}\big(p(x)\big)=p\big(-(x+a+b+1)\big),\quad p\in\mathbb{R}[x].
\end{equation}
Clearly we have $\I^{a+b}(\theta_x)=\theta_x$. Hence every polynomial in $\theta_x$ is invariant under the action of $\I^{a+b}$. Conversely, if $p\in\mathbb{R}[x]$ is invariant under $\I^{a+b}$, then $p\in\mathbb{R}[\theta_x]$. We also have that if $p\in\mathbb{R}[x]$ is skew invariant, i.e. $\I^{a+b}(p)=-p,$ then $p$ is divisible by $\theta_{x-1/2}-\theta_{x+1/2}$, and the quotient belongs to $\mathbb{R}[\theta_x]$. We remark here that, in the case of Hahn polynomials and from the definition of $\theta_x$ and $\sigma_x$, we have that $\sigma_{x+1}=\theta_{x-1/2}-\theta_{x+1/2}$. Observe that $\sigma_{x+1}$ is skew invariant itself.

We have the following properties according to the definition \eqref{inv}, $h=1,2$:
\begin{align}
\label{Iprop1}\I^{a+b+i}(\theta_{x-j})&=\theta_{x+i+j},\quad &\I^{a+b+i}(\sigma_{x-j})&=-\sigma_{x+i+j+2},\\
\label{ABpropsH}
\I^{a+b+i}(N_{x-j-s}^{h;m-s})&=D_{x+m+i+j}^{h;m-s},\quad &\I^{a+b+i}(D_{x-j-s}^{h;m-s})&=N_{x+m+i+j}^{h;m-s}.
\end{align}

\medskip

We are now ready to check that the three hypotheses (\ref{ass0}), (\ref{ass1}) and (\ref{ass2}) in Theorem \ref{Teor1}
hold for the four $\D$-operators above associated to the Hahn polynomials.

\begin{lemma}\label{l5.2} Let $\A$ and $(p_n)_n$ be respectively, the algebra of difference operators (\ref{algdiffd}) and the sequence of Hahn polynomials $p_n=h_n^{a,b, N}$. We denote by $D_p$ the second-order difference operator (\ref{dopHahn}), so that $\theta_n=n(n+a+b+1)$ and
$D_p(p_n)=\theta_np_n$. For $j=1,2,3,4$, we also have $m_j$ $\D$-operators defined by the sequences $(\varepsilon_{n,j})_n$, $(\sigma_n)_n$ (see
(\ref{eps1})--(\ref{eps4})). Write then $m=\sum_{i=1}^4 m_i$ and let $\Xi$ be a  polynomial in $x$ invariant under the action of $\I^{a+b-m-1}$. Define  the rational function $S$  by
\begin{equation}\label{GGH}
S(x)=\sigma_{x-\frac{m-1}{2}}\Xi(x)\frac{(D^{1;m-1}_{x-1})^{m_2+m_4}(D^{2;m-1}_{x-1})^{m_1+m_2}}{p(x)q(x)},
\end{equation}
where $p$ and $q$ are the polynomials defined by (\ref{def1p}) and (\ref{def1q}), respectively.
Then the three hypotheses (\ref{ass0}), (\ref{ass1}) and (\ref{ass2}) in Theorem \ref{Teor1}
hold.
\end{lemma}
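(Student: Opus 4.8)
The plan is to verify the three hypotheses \eqref{ass0}, \eqref{ass1} and \eqref{ass2} one at a time, in that order, exploiting the explicit form of $S$ in \eqref{GGH} together with the $\xi$-formulas \eqref{xit}, the product rules \eqref{ABprop2}, and the behaviour of the involution $\I^{a+b}$ recorded in \eqref{Iprop1}--\eqref{ABpropsH}. Throughout I would work with the polynomials $Y_i$ chosen so that $\deg Y_i=u_i^{j\rceil}$ for $i\in\UU_j$, so that Lemma \ref{lgp1} applies to the Casorati determinant $\Omega$ and to the modified determinants that appear in $M_h$.

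First I would check \eqref{ass0}, i.e.\ that $S(x)\Omega(x)$ is a polynomial. Writing out $\Omega(x)=\det\bigl(\xi_{x-j,m-j}^lY_l(\theta_{x-j})\bigr)$ and substituting the four cases of \eqref{xit}, one factors out of each row the appropriate products of $N^{h;\cdot}$ and $D^{h;\cdot}$; this is precisely the manipulation that turns $\Omega(x)$ into $p(x)q(x)P(x)$ up to the $D$-power prefactors appearing in \eqref{GGH}, with $P$ the polynomial of Lemma \ref{lgp1}. Thus $S(x)\Omega(x)=\sigma_{x-\frac{m-1}{2}}\Xi(x)P(x)$, which is manifestly a polynomial; its degree is $d+1+\deg\Xi$ with $d$ as in Lemma \ref{lgp1}. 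The nonvanishing conditions \eqref{yas0} are exactly the hypotheses \eqref{yas0} one must (and does) impose here so that $P$ really has the stated degree and does not degenerate.

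Next comes \eqref{ass1}: one must produce polynomials $\tilde M_h$ with $M_h(x)=\sigma_{x+1}^h\tilde M_h(\theta_x)$. Here $\sigma_{x+1}^h=\sigma_{x+1}=-(2x+a+b+1)$ for all four families, and from the remark in the text $\sigma_{x+1}=\theta_{x-1/2}-\theta_{x+1/2}$ is skew invariant under $\I^{a+b}$. So the strategy is: (i) show $M_h(x)$, as given by \eqref{emeiexp} with $S$ from \eqref{GGH}, is a polynomial — this follows from Lemma \ref{lgp2}, whose degree hypothesis $\deg(S\Omega_g^h)\le 2g+\deg(S\Omega_0^h)$ is again supplied by Lemma \ref{lgp1} applied with $Y_h$ replaced by $x^g$; (ii) show $M_h$ is skew invariant under $\I^{a+b}$, so that $M_h/\sigma_{x+1}$ is invariant, hence lies in $\RR[\theta_x]$, which defines $\tilde M_h$. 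Step (ii) is where the choice of $\Xi$ invariant under $\I^{a+b-m-1}$ is used: the $m$-shift in the argument of $S$ inside \eqref{emeiexp} (the terms $S(x+j)$) together with \eqref{ABpropsH} converts the $\I^{a+b-m-1}$-invariance of $\Xi$ and the skew invariance of $\sigma_{x-\frac{m-1}{2}}$ into the desired skew invariance of the whole sum $M_h$; one also uses that applying $\I^{a+b}$ to the determinant in \eqref{emeiexp} reverses the order of the columns, producing the sign $(-1)^{\binom{m}{2}}$ that must be tracked.

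Finally \eqref{ass2}: one defines $\lambda_x$ by \eqref{lamd} with this $S$ — legitimate since $S\Omega$ is a polynomial — and must exhibit a polynomial $P_S$ with $P_S(\theta_x)=2\lambda_x+\sum_h Y_h(\theta_x)M_h(x)$. The argument is: the right-hand side is a polynomial in $x$ by the previous two steps; using \eqref{Pdiff} (derived in the first Remark) its first difference is $S(x)\Omega(x)+S(x+m)\Omega(x+m)$, and the $m$-shift symmetry built into $\Xi$ (invariance under $\I^{a+b-m-1}$, equivalently $x\mapsto -(x+a+b+1-m-1)=-(x+a+b-m)$, which swaps the roles of $x$ and $x+m$ appropriately via \eqref{ABpropsH}) makes this first difference invariant under $\I^{a+b}$; hence the right-hand side itself is invariant under $\I^{a+b}$ up to an additive constant, and therefore belongs to $\RR[\theta_x]$, which is $P_S$. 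I expect the main obstacle to be the bookkeeping in step (ii) of \eqref{ass1} and in \eqref{ass2}: carefully matching the column permutations induced by $\I^{a+b}$ on the various minors in \eqref{emeiexp} against the sign $(-1)^{h+j}$ and the half-integer shifts in $\sigma_{x-\frac{m-1}{2}}$ and $S(x+j)$, so that everything collapses to a clean skew-invariance (resp.\ invariance) statement. Once the invariance is in place, the identification with an element of $\RR[\theta_x]$ is automatic from the characterization of that subring given after \eqref{inv}.
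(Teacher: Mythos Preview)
Your outline follows the paper's proof almost step for step: clear the $\xi$-denominators to identify $S(x)\Omega(x)$ with $\sigma_{x-(m-1)/2}\,\Xi(x)\,P(x)$ via Lemma~\ref{lgp1}; invoke Lemma~\ref{lgp2} for the polynomiality of $M_h$ and then establish $\I^{a+b}(M_h)=-M_h$ to extract the factor $\sigma_{x+1}$; and finally handle \eqref{ass2} through the first-difference identity \eqref{Pdiff}.

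There is one genuine slip, in your treatment of \eqref{ass2}. You claim that $H(x)-H(x-1)=S(x)\Omega(x)+S(x+m)\Omega(x+m)$ should be shown \emph{invariant under $\I^{a+b}$}, and that this forces $H\in\RR[\theta_x]$. That implication is false: invariance of the first difference under $\I^{a+b}$ would only say $H(x)-H(x-1)\in\RR[\theta_x]$, and the antidifference of an element of $\RR[\theta_x]$ need not lie in $\RR[\theta_x]$ (take $H(x)-H(x-1)\equiv 1$, giving $H(x)=x$). The correct involution here is $\I^{a+b-1}$, and the correct property is \emph{skew}-invariance: by \eqref{Iprop1} one has $\I^{a+b-1}(\theta_x)=\theta_{x-1}$, so $\I^{a+b-1}\bigl(P_S(\theta_x)-P_S(\theta_{x-1})\bigr)=-\bigl(P_S(\theta_x)-P_S(\theta_{x-1})\bigr)$, and conversely any polynomial skew under $\I^{a+b-1}$ is a $\RR$-linear combination of the $\theta_x^k-\theta_{x-1}^k$, hence is the first difference of something in $\RR[\theta_x]$. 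What one actually verifies (and what the paper does) is
\[
\I^{a+b-1}\bigl(S(x)\Omega(x)\bigr)=-S(x+m)\Omega(x+m),
\]
using \eqref{ABpropsH}, \eqref{Qprop}, \eqref{qprop}, \eqref{pprop}, the column reversal in the determinant, and the fact that $\I^{a+b-1}(\sigma_{x-(m-1)/2})=-\sigma_{x+m-(m-1)/2}$. Once you shift the index from $a+b$ to $a+b-1$ and flip the sign, your outline goes through unchanged.

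A minor side remark: the lemma is stated for arbitrary $Y_i$, and neither Lemma~\ref{lgp1} (for polynomiality of $P$) nor Lemma~\ref{lgp2} (for polynomiality of $M_h$) actually needs your degree restriction $\deg Y_i=u_i^{j\rceil}$ or the degree hypothesis you invoke; those are only used later to compute the order of the operator. The paper's proof accordingly does not impose them at this stage.
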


\begin{proof}
Consider the sets $\UU_j$, $j=1,2,3,4$, given by (\ref{ppmm}). By interchanging rows, we can assume that each $\UU_j$
is formed by the indexes $h$ where the $\mathcal{D}$-operator $\D_h$ is
defined by the sequence $(\varepsilon_{n,j})_n$ (see (\ref{eps1})--(\ref{eps4})).

Since the polynomial $\Xi$ is invariant under the action of $\I^{a+b-m-1}$, we have
\begin{equation}\label{Qprop}
\I^{a+b+i}\big(\Xi(x-j)\big)=\Xi(x+m+i+j+1).
\end{equation}
As a consequence of \eqref{Iprop1} and \eqref{ABpropsH} we have
\begin{equation}\label{qprop}
\I^{a+b+i}(q(x-j))=(-1)^{\frac{m(m-1)}{2}}q(x+i+j+m+1),
\end{equation}
and
\begin{equation}\label{pprop}
\I^{a+b+i}(p(x-j))=p(x+i+j+m+1).
\end{equation}

\medskip

We now check the first assumption (\ref{ass0}) in Theorem \ref{Teor1}, that is:
$S(x)\Omega (x)$ is a polynomial in $x$.

From the definition of $S(x)$ in \eqref{GGH} and $\Omega(x)$ in \eqref{casd1} it is straightforward to see,
using \eqref{xit} and \eqref{ABprop2}, that

\begin{align}
\label{SOm}& S(x)\Omega(x)=\frac{\sigma_{x-\frac{m-1}{2}}\Xi(x)}{p(x)q(x)}\left|
 \begin{array}{@{}c@{}lccc@{}c@{}}
        &&&\hspace{-1.5cm}{}_{j=1,\ldots , m} \\
        \dosfilas{N_{x-j}^{2;m-j}D_{x-1}^{2;j-1}Y_l(\theta_{x-j})}{l\in \UU_1}\\
    \dosfilas{N_{x-j}^{1;m-j}N_{x-j}^{2;m-j}D_{x-1}^{1;j-1}D_{x-1}^{2;j-1}Y_l(\theta_{x-j})}{l\in \UU_2} \\
    \dosfilas{Y_l(\theta_{x-j})}{l\in \UU_3} \\
    \dosfilas{N_{x-j}^{1; m-j}D_{x-1}^{1;j-1}Y_l(\theta_{x-j})}{l\in \UU_4}
  \end{array}
  \right|.
\end{align}
Therefore $S(x)\Omega(x)=\sigma_{x-\frac{m-1}{2}}\Xi (x)P(x)$, where $P$ is the rational function (\ref{def2p}) defined in the Lemma \ref{lgp1}. According to this Lemma, $P$ is actually a polynomial and hence $S(x)\Omega(x)$ is a polynomial as well.

\medskip

We now check the second assumption (\ref{ass1}) in Theorem \ref{Teor1}, that is:
there exist $\tilde{M}_1,\ldots,\tilde{M}_m,$ polynomials in $x$ such that
\begin{equation*}
M_h(x)=\sigma_{x+1}\tilde{M}_h(\theta_x),\quad h=1,\ldots,m.
\end{equation*}

Write now
$$
\Psi_j^h(x)=\xi_{x-j,m-j}^hS(x)
\det\left(\xi_{x-r,m-r}^{l}Y_l(\theta_{x-r})\right)_{l\in \II_h;r\in \II_j}, \quad h,j=1,\ldots , m.
$$
A simple computation using (\ref{xit}) shows that $\Psi_j^h$, $h,j=1,\ldots , m$, are polynomials in $x$. Hence Lemma \ref{lgp2} gives that $M_h$ is also a polynomial in $x$.

It is now enough to see that
\begin{equation*}\label{InvMh}
\I^{a+b}(M_h(x))=-M_h(x),\quad h=1,\ldots,m,
\end{equation*}
where $M_h(x), h=1,\ldots,m,$ are defined in \eqref{emeiexp}. Hence, $M_h(x), h=1,\ldots,m,$ according to the discussion after \eqref{inv}, is divisible by $\sigma_{x+1}$ and the quotient belongs to $\mathbb{R}[\theta_x]$.

Assume that the $h$-th $\mathcal{D}$-operator is $\mathcal{D}_1$ (similar for $\mathcal{D}_2$, $\mathcal{D}_3$ and $\mathcal{D}_4$). In that case, as before, we can remove all denominators in $M_h(x)$ and rearrange the determinant to write
\begin{align*}
\nonumber M_h(x)=\sum_{j=1}^m&(-1)^{h+j}\frac{\sigma_{x+j-\frac{m-1}{2}}\Xi(x+j)}{p(x+j)q(x+j)}N_{x}^{2;m-j}D_{x+j-1}^{2;j-1}\times\\
&\qquad \times\left|
  \begin{array}{@{}c@{}lccc@{}c@{}}
        &&&\hspace{-1.5cm}{}_{l\neq h, r\neq j} \\
        \dosfilas{N_{x+j-r}^{2;m-r}D_{x+j-1}^{2;r-1}Y_l(\theta_{x+j-r})}{l\in \UU_1\setminus\{h\}}\\
    \dosfilas{N_{x+j-r}^{1;m-r}N_{x+j-r}^{2;m-r}D_{x+j-1}^{1;r-1}D_{x+j-1}^{2;r-1}Y_l(\theta_{x+j-r})}{l\in \UU_2} \\
    \dosfilas{Y_l(\theta_{x+j-r})}{l\in \UU_3} \\
    \dosfilas{N_{x+j-r}^{1;m-r}D_{x+j-1}^{1;r-1}Y_l(\theta_{x+j-r})}{l\in \UU_4}
  \end{array}
  \right|.
\end{align*}
Hence, using \eqref{ABpropsH}, \eqref{Qprop}, \eqref{qprop} and \eqref{pprop}, we have
\begin{align*}
&\I^{a+b}\big(M_h(x)\big)=-\sum_{j=1}^m(-1)^{h+j}(-1)^{\frac{m(m-1)}{2}}\frac{\sigma_{x+\frac{m-1}{2}-j+2}\Xi(x+m-j+1)}{p(x+m-j+1)q(x+m-j+1)}\times\\
&\hspace{0.5cm}\qquad\qquad\qquad\times D_{x+m-j}^{2;m-j}N_{x}^{2;j-1}\left|
 \begin{array}{@{}c@{}lccc@{}c@{}}
        &&&\hspace{-1.5cm}{}_{l\neq h, r\neq j} \\
        \dosfilas{D_{x+m-j}^{2;m-r}N_{x+r-j}^{2;r-1}Y_l(\theta_{x-j+r})}{l\in \UU_1\setminus\{h\}}\\
    \dosfilas{D_{x+m-j}^{1;m-r}D_{x+m-j}^{2;m-r}N_{x+r-j}^{1;r-1}N_{x+r-j}^{2;r-1}Y_l(\theta_{x-j+r})}{l\in \UU_2} \\
    \dosfilas{Y_l(\theta_{x-j+r})}{l\in \UU_3} \\
    \dosfilas{D_{x+m-j}^{1;m-r}N_{x+r-j}^{1; r-1}Y_l(\theta_{x-j+r})}{l\in \UU_4}
  \end{array}
  \right|\\
 &=-(-1)^{\frac{m(m-1)}{2}}(-1)^{m-1}(-1)^{\frac{(m-1)(m-2)}{2}}\sum_{j=1}^m(-1)^{h+j}\frac{\sigma_{x+j-\frac{m-1}{2}}\Xi(x+j)}{p(x+j)q(x+j)} \times\\
&\hspace{0.5cm}\qquad\qquad\qquad\times D_{x+j-1}^{2;j-1}N_{x}^{2;m-j}\left|
 \begin{array}{@{}c@{}lccc@{}c@{}}
        &&&\hspace{-1.5cm}{}_{l\neq h, r\neq j} \\
        \dosfilas{D_{x+j-1}^{2;r-1}N_{x+j-r}^{2;m-r}Y_l(\theta_{x+j-r})}{l\in \UU_1\setminus\{h\}}\\
    \dosfilas{D_{x+j-1}^{1;r-1}D_{x+j-1}^{2;r-1}N_{x+j-r}^{1;m-r}N_{x+j-r}^{2;m-r}Y_l(\theta_{x+j-r})}{l\in \UU_2} \\
    \dosfilas{Y_l(\theta_{x+j-r})}{l\in \UU_3} \\
    \dosfilas{D_{x+j-1}^{1;r-1}N_{x+j-r}^{1; m-r}Y_l(\theta_{x+j-r})}{l\in \UU_4}
  \end{array}
  \right|\\
&=-(-1)^{m(m-1)}M_h(x)=-M_h(x).
\end{align*}
Observe that in the second step we have renamed the index $j$ while in the determinant we have interchanged all columns, so we have the corresponding change of signs.

\medskip

We finally check the third assumption (\ref{ass2}) in Theorem \ref{Teor1}, that is: there exists a polynomial $P_S$ such that
\begin{equation*}
P_S(\theta_x)=2\lambda_x+\sum_{h=1}^mY_h(\theta_x)M_h(x).
\end{equation*}
Call
$$
H(x)=2\lambda_x+\sum_{h=1}^mY_h(\theta_x)M_h(x).
$$
Taking into account that $M_h$, $h=1,\ldots , m$, are polynomials in $x$, we conclude that $H$ is also a polynomial in $x$.
From \eqref{Pdiff} we have that
$$
H(x)-H(x-1)=S(x)\Omega(x)+S(x+m)\Omega(x+m).
$$
For this step it is enough to see that
\begin{equation}\label{InvGO}
\I^{a+b-1}\big(H(x)-H(x-1)\big)=-\big(H(x)-H(x-1)).
\end{equation}
If such is the case, then we can always find a polynomial $P_S$ such that $P_S(\theta_x)=H(x)$, since we already know, from \eqref{Iprop1}, that
$$
\I^{a+b-1}\big(P_S(\theta_x)-P_S(\theta_{x-1})\big)=P_S(\theta_{x-1})-P_S(\theta_x)=-\big(P_S(\theta_x)-P_S(\theta_{x-1})\big).
$$

To proof \eqref{InvGO} it is enough to see that
$$
\I^{a+b-1}\big(S(x)\Omega(x)\big)=-\big(S(x+m)\Omega(x+m)\big).
$$
From \eqref{SOm} we have, using again \eqref{ABpropsH}, \eqref{Qprop}, \eqref{qprop} and \eqref{pprop}, that

\begin{align*}
& \I^{a+b-1}\big(S(x)\Omega(x)\big)=-(-1)^{\frac{m(m-1)}{2}}\frac{\sigma_{x+\frac{m+1}{2}}\Xi(x+m)}{p(x+m)q(x+m)}\times \\
&\hspace{0.5cm}\qquad\qquad\qquad\times \left|
 \begin{array}{@{}c@{}lccc@{}c@{}}
        &&&\hspace{-1.5cm}{}_{j=1,\ldots , m} \\
        \dosfilas{D_{x+m-1}^{2;m-j}N_{x+j-1}^{2;j-1}Y_l(\theta_{x+j-1})}{l\in \UU_1}\\
    \dosfilas{D_{x+m-1}^{1;m-j}D_{x+m-1}^{2;m-j}N_{x+j-1}^{1;j-1}N_{x+j-1}^{2;j-1}Y_l(\theta_{x+j-1})}{l\in \UU_2} \\
    \dosfilas{Y_l(\theta_{x+j-1})}{l\in \UU_3} \\
    \dosfilas{D_{x+m-1}^{1;m-j}N_{x+j-1}^{1; j-1}Y_l(\theta_{x+j-1})}{l\in \UU_4}
  \end{array}
  \right|\\
=&-(-1)^{\frac{m(m-1)}{2}}(-1)^{\frac{m(m-1)}{2}}\frac{\sigma_{x+m-\frac{m-1}{2}}\Xi(x+m)}{p(x+m)q(x+m)}\times \\
&\hspace{0.5cm}\qquad\qquad\qquad\times\left|
 \begin{array}{@{}c@{}lccc@{}c@{}}
        &&&\hspace{-1.5cm}{}_{j=1,\ldots , m} \\
        \dosfilas{D_{x+m-1}^{2;j-1}N_{x+m-j}^{2;m-j}Y_l(\theta_{x+m-j})}{l\in \UU_1}\\
    \dosfilas{D_{x+m-1}^{1;j-1}D_{x+m-1}^{2;j-1}N_{x+m-j}^{1;m-j}N_{x+m-j}^{2;m-j}Y_l(\theta_{x+m-j})}{l\in \UU_2} \\
    \dosfilas{Y_l(\theta_{x+m-j})}{l\in \UU_3} \\
    \dosfilas{D_{x+m-1}^{1;j-1}N_{x+m-j}^{1; m-j}Y_l(\theta_{x+m-j})}{l\in \UU_4}
  \end{array}
  \right|\\
=&-S(x+m)\Omega(x+m).
\end{align*}

\end{proof}

\section{Bispectral Krall-Hahn polynomials}\label{sch}
In this Section we put together all the ingredients showed in the previous Sections to construct bispectral Krall-Hahn polynomials.

We can apply Theorem \ref{Teor1} to produce from arbitrary polynomials $Y_j$, $j\ge 0$, a large class of sequences of polynomials $(q_n)_n$ satisfying higher-order difference equations. But only for a convenient choice of the polynomials $Y_j$, $j\ge 0$, these polynomials $(q_n)_n$ are also orthogonal with respect to a measure. As we wrote in the Introduction, when the sequence $(p_n)_n$ is the Hahn polynomials, a very nice symmetry between the family $(p_n)_n$ and the polynomials $Y_j$'s appears. Indeed, the polynomials $Y_j$ can be chosen as dual Hahn polynomials with parameters depending on the $\D$-operator $\D_h$. This symmetry is given by the recurrence relation (\ref{relaR}), where $Y_i=Z_i^h$.

\begin{lemma}\label{lemRme} Consider the dual Hahn polynomials
\begin{align*}
Z_{j}^{1}(x)&=R_j^{-b,-a,a+b+N}(x+a+b),\quad j\ge 0,\\
Z_{j}^{2}(x)&=R_j^{-a,-b,a+b+N}(x+a+b),\quad j\ge 0,\\
Z_{j}^{3}(x)&=R_j^{-b,-a,-2-N}(x+a+b),\quad j\ge 0,\\
Z_{j}^{4}(x)&=R_j^{-a,-b,-2-N}(x+a+b),\quad j\ge 0.
\end{align*}
Then they satisfy the recurrence (\ref{relaR}), where $(a_n)_{n\in \ZZ }$, $(b_n)_{n\in \ZZ }$, $(c_n)_{n\in \ZZ }$ are the sequences of coefficients in the three-term recurrence relation for the  Hahn polynomials $(h_n^{a,b,N})_n$ (\ref{ttrrH}) and
\begin{align*}
&\varepsilon_{n}^{1}=-\frac{n-N-1}{n+a+b+N+1}, \qquad &\eta_1&=1,\quad &\kappa _1&=-b-N,\\
&\varepsilon_{n}^{2}=\frac{(n-N-1)(n+b)}{(n+a)(n+a+b+N+1)}, \qquad &\eta_2&=-1,\quad &\kappa _2&=a,\\
&\varepsilon_{n}^{3}=1, \qquad &\eta_3&=-1,\quad &\kappa _3&=-N-1,\\
&\varepsilon_{n}^{4}=-\frac{n+b}{n+a}, \qquad &\eta_4&=1,\quad &\kappa _4&=1.\\
\end{align*}
\end{lemma}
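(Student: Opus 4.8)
The plan is to verify directly that each family $(Z_j^h(x))_j$ satisfies the recurrence \eqref{relaR} with the indicated data, by exploiting the known second-order difference equation and three-term recurrence for dual Hahn polynomials. Recall that the dual Hahn polynomials $R_n^{\alpha,\beta,M}(x)$ (as normalized in \eqref{dHP}) satisfy a three-term recurrence relation in the variable that appears inside the product $\prod_{i=0}^{j-1}[x-i(i+\alpha+\beta+1)]$; equivalently, after the substitution $x\mapsto x+a+b$ and relabeling parameters, each $Z_j^h$ satisfies a three-term recurrence in $j$ whose coefficients are rational in $j$. The content of the lemma is that this very recurrence, read the right way, coincides with \eqref{relaR}: the left-hand side of \eqref{relaR} is a second-order difference operator \emph{in $n$} acting on $Z_j^h(\theta_n)$ with coefficients built from $a_n,b_n,c_n$ and $\varepsilon_n^h$, and the claim is that it reduces to multiplication by the scalar $\eta_h j+\kappa_h$. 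So the symmetry being used is exactly the bispectrality of dual Hahn: what is a recurrence coefficient on one side is an eigenvalue on the other.

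First I would treat the case $h=3$, where $\varepsilon_n^3=1$, since there \eqref{relaR} becomes simply $a_{n+1}Z_j^3(n+1)-b_nZ_j^3(n)+c_nZ_j^3(n-1)=(\kappa_3-j)Z_j^3(n)$, i.e.\ precisely the statement that $x\mapsto Z_j^3(x)$, evaluated at $\theta_n$, is an eigenfunction of the Hahn difference operator $D_p$ in disguise — but this is just the duality displayed just before the start of Section~\ref{SEC4} (the identity $R_x^{a,b,N}(n(n+a+b+1))=\cdots h_n^{a,b,N}(x)$), transported through the parameter change $(a,b,N)\mapsto(-b,-a,-2-N)$ and the shift $x\mapsto x+a+b$. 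Working out that $\theta_n=n(n+a+b+1)$ matches the argument $i(i+\alpha+\beta+1)$ with $\alpha+\beta=-a-b-2$ forces the shift by $a+b$ and pins down $\eta_3=-1$, $\kappa_3=-N-1$ from the $(-N+j)_{n-j}$ and $(-n)_j$ factors in \eqref{dHP}. The remaining three cases are obtained from this one by the two elementary symmetries of the Hahn three-term recurrence: swapping $a\leftrightarrow b$ (which sends the sequences $a_n,b_n,c_n$ to a conjugate set and accounts for $h=1$ vs $h=3$ and $h=2$ vs $h=4$), and the substitution $N\mapsto -2-N$ that interchanges the "type" $\{1,2\}$ with $\{3,4\}$. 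Concretely, each nontrivial $\varepsilon_n^h$ is the ratio that conjugates one version of the recurrence into another, so one checks that replacing $a_n,b_n,c_n$ by $\varepsilon_{n+1}^ha_{n+1}$, $b_n$, $c_n/\varepsilon_n^h$ in \eqref{relaR} is the gauge transformation matching the corresponding dual Hahn recurrence, and then reads off $(\eta_h,\kappa_h)$ from the leading/trailing coefficients exactly as in the $h=3$ case.

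The one genuine subtlety — and the place I expect to spend real care — is the vanishing-denominator clause: for $h=1,2,4$ the factor $\varepsilon_n^h$ has zeros, and \eqref{relaR} involves $c_n/\varepsilon_n^h$. For instance $\varepsilon_n^1$ vanishes at $n=N+1$ and $\varepsilon_n^{1}$ has a pole at $n=-a-b-N-1$; one must check that at each such $n_0$ the coefficient $c_{n_0}$ also vanishes (so that the product $c_{n_0}/\varepsilon_{n_0}^{h_0}$ is an indeterminate $0/0$ rather than a true pole), and then compute the finite replacement value $d_{n_0}^{h_0}$ by taking the limit, verifying that \eqref{relaR} still holds there. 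From the explicit formula $c_n=-\frac{(n+a+b)(n+b)(N-n+1)}{(2n+a+b)(2n+a+b+1)}$ one sees $c_{N+1}=0$ and $c_{-b}=0$, which are exactly the places where the relevant $\varepsilon_n^h$ degenerates, so this works out; the bookkeeping is routine but must be done for each of the three families. I would organize the proof as: (i) establish the $h=3$ identity from the dual Hahn difference equation/duality; (ii) derive $h=1,2,4$ by the $a\leftrightarrow b$ and $N\mapsto-2-N$ symmetries together with the gauge factor $\varepsilon_n^h$; (iii) dispose of the removable-singularity cases by the limit computation just described.
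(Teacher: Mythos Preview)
Your approach is correct and is essentially what the paper does: the paper's proof is the single line ``It is similar to the proof of Lemma 5.1 of \cite{ddI}'', and that referenced proof is precisely a direct verification that the second-order difference equation satisfied by the (appropriately shifted and reparametrized) dual Hahn polynomials coincides with \eqref{relaR} once the gauge factors $\varepsilon_n^h$ are absorbed. Your added organizational layer (doing $h=3$ first and then invoking the $a\leftrightarrow b$ and $N\mapsto -2-N$ symmetries, plus the explicit check of the removable $c_n/\varepsilon_n^h$ singularity at $n=N+1$) is a clean way to structure that same computation, not a different argument.
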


\begin{proof}
It is similar to the proof of Lemma 5.1 of \cite{ddI}.
\end{proof}
From now on we will assume that $N$ is a positive integer. This condition is necessary for the existence of a positive weight for the Hahn polynomials, and only in this case we have an explicit expression of that weight. However, this condition is not needed in our construction and hence Theorem \ref{t6.2} and Corollary \ref{jodme} are also valid when $N$ is not a positive integer (once one has adapted the constraints on the parameters $a$ and $b$).

Since we  have four $\D$-operators for Hahn polynomials, we make a partition of the indices in Theorem \ref{Teor1} (see \eqref{ppmm})
and take
\begin{equation}\label{defvardh}
\varepsilon _n^h=\begin{cases} -\displaystyle\frac{n-N-1}{n+a+b+N+1},& \mbox{for $h\in \UU_1$,}\\
\displaystyle\frac{(n-N-1)(n+b)}{(n+a)(n+a+b+N+1)},& \mbox{for $h\in \UU_2$,}\\
1,& \mbox{for $h\in \UU_3$,}\\
-\displaystyle\frac{n+b}{n+a},& \mbox{for $h\in \UU_4$.}
 \end{cases}
\end{equation}
For the sequences $(\sigma_n^h)_n$, we always take $\sigma_n^h=\sigma_n=-(2n+a+b-1)$.
In particular, the auxiliary functions $\xi_{x,i}^h$, $h=1,\ldots, m$, $i\in \ZZ$, (see (\ref{defxi})) are then defined by \eqref{xit}. Finally write
$$
\ZZ_{i}=\{ j\in \ZZ:j\le i\},\quad i\in \ZZ.
$$

We are now ready to establish  the main Theorem of this paper.

\begin{theorem}\label{t6.2} Let $\F=(F_1,F_2,F_3,F_4)$ be a quartet of finite sets of positive integers (the empty set is allowed, in which case we take $\max F=-1$). For $\h=(h_1,h_2,h_3)$, $h_i\ge 1$, consider the quartet $\U=(U_1,U_2,U_3,U_4)$ whose elements are the transformed sets $J_{h_j}(F_j)=U_j=\{ u_{i}^{j\rceil}: i\in \UU_j\}$, $j=1,2,3$, and $I(F_4)=U_4=\{ u_{i}^{4\rceil}: i\in \UU_4\}$, where the involution $I$ and the transform $J_h$ are defined by (\ref{dinv}) and (\ref{dinv2}), respectively. Define $m=m_1+m_2+m_3+m_4$, where $m_i$ denotes de cardinal of $U_i$, $f_{i,M}=\max (F_i),$ and $n_{F_i}$ the cardinal of $F_i$ for $i=1,2,3,4$.
Let $a$ and $b$ be real numbers satisfying
\begin{equation}\label{appm1}
a\not \in \ZZ_{f_{2,M}+f_{4,M}+h_2}, \quad b\not \in \ZZ_{f_{1,M}+f_{3,M}+h_1+h_3-1},\quad a+b\not\in \ZZ_{\sum_{i=1}^4f_{i,M}+\sum_{i=1}^3h_i}.
\end{equation}
In addition, we assume that
\begin{equation}\label{appm2}
a\not=1,2,\ldots, \quad \mbox{if \quad$F_2$ or $F_4\not =\emptyset$,\quad and \quad  $b\not=1,2,\ldots,$ \quad if \quad $F_1$ or $F_3\not =\emptyset$}.
\end{equation}
Consider the Hahn and dual Hahn polynomials $(h_n^{a,b,N})_n$ (\ref{HP}) and $(R_n^{a,b,N})_n$ (\ref{dHP}), respectively. Assume that $\Omega_{a,b,N}^{\U,\h} (n)\not =0$ for $0\le n\le N+m_3+m_4+1$ where the $m\times m$ Casorati determinant $\Omega _{a,b,N}^{\U,\h}$ is defined by
\begin{align*}\label{defom}
\Omega _{a, b, N}^{\U,\h}(x)&=  \left|
  \begin{array}{@{}c@{}lccc@{}c@{}}
    & &&\hspace{-1.3cm}{}_{j=1,\ldots , m} \\
    \dosfilas{ \xi_{x-j,m-j}^1R_{u}^{-b,-a,a+b+N}(\theta_{x-j}+a+b) }{u\in U_1} \\
    \dosfilas{ \xi_{x-j,m-j}^2R_{u}^{-a,-b,a+b+N}(\theta_{x-j}+a+b)}{u\in U_2}
    \\
    \dosfilas{ \xi_{x-j,m-j}^3R_{u}^{-b,-a,-2-N}(\theta_{x-j}+a+b) }{u\in U_3}\\
    \dosfilas{\xi_{x-j,m-j}^4R_{u}^{-a,-b,-2-N}(\theta_{x-j}+a+b) }{u\in U_4}
  \end{array}\right|.
\end{align*}
We then define the sequence of polynomials $q_n$, $n\ge 0$, by
\begin{equation}\label{qusch}
q_n(x)=\left|
  \begin{array}{@{}c@{}lccc@{}c@{}}
   &(-1)^{j-1}h_{n+1-j}^{a,b,N}(x) &&\hspace{-1.3cm}{}_{j=1,\ldots , m+1} \\
    \dosfilas{ \xi_{n-j,m-j}^1R_{u}^{-b,-a,a+b+N}(\theta_{n-j}+a+b) }{u\in U_1} \\
    \dosfilas{ \xi_{n-j,m-j}^2R_{u}^{-a,-b,a+b+N}(\theta_{n-j}+a+b)}{u\in U_2}
    \\
    \dosfilas{ \xi_{n-j,m-j}^3R_{u}^{-b,-a,-2-N}(\theta_{n-j}+a+b)  }{u\in U_3}\\
    \dosfilas{\xi_{n-j,m-j}^4R_{u}^{-a,-b,-2-N}(\theta_{n-j}+a+b) }{u\in U_4}
  \end{array}\right|.
\end{equation}
Then

\noindent
(1) The polynomials $q_n$, $0\le n\le N+m_3+m_4$, are orthogonal and have non-null norms with respect to the measure
\begin{align*}
\tilde \rho^{\F,\h}_{a,b,N}&=\prod_{f\in F_1}(b+N+1-f-x)\prod_{f\in F_2}(x+a+1-f)\\&\quad\quad \quad \times
\prod_{f\in F_3}(N+f-x) \prod_{f\in F_4}(x+f_{4,M}+1-f)\rho_{\tilde a,\tilde b ,\tilde N}(x+f_{4,M}+1),
\end{align*}
where
\begin{equation*}\label{abnt}
\tilde a=a -f_{2,M}-f_{4,M}-h_2-1,\quad \tilde b=b -f_{1,M}-f_{3,M}-h_1-h_3,
\quad \tilde N=N+f_{3,M}+f_{4,M}+h_3+1,
\end{equation*}
and $\rho_{a,b,N}$ is the Hahn weight \eqref{pesoH}.

(2) The polynomials $q_n$, $0\le n\le N+m_3+m_4$, are eigenfunctions of a higher-order difference operator of the form (\ref{doho}) with
$$
-s=r=\sum_{f\in F_4}f-\sum_{f\in F_1,F_2,F_3}f-\sum_{i=1}^4\binom{n_{F_i}}{2}+\sum_{i=1}^3n_{F_i}(f_{i,M}+h_i)+1
$$
(which can be constructed using Theorem \ref{Teor1}).
\end{theorem}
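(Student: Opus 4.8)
The plan is to follow the scheme used in \cite{ddI} for the Charlier, Meixner and Krawtchouk families, with the two changes forced here: $\theta_n$ is quadratic in $n$, so $\D$-operators of type $2$ must be used (which is why Theorem \ref{Teor1} and Lemmas \ref{lgp1}--\ref{l5.2} were set up in that generality), and the target measure $\tilde\rho^{\F,\h}_{a,b,N}$ is most conveniently handled as a Geronimus transform of a shifted Hahn weight rather than as a Christoffel transform. For part (2) I would first specialize Theorem \ref{Teor1}: take the $m$ polynomials $Y_h$ to be the dual Hahn polynomials appearing in the rows of \eqref{qusch}, the sequences $(\varepsilon_n^h)_n$ as in \eqref{defvardh}, $\sigma_n^h=-(2n+a+b-1)$, and $\Xi\equiv 1$ in \eqref{GGH} (a constant is trivially invariant under $\I^{a+b-m-1}$). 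Lemma \ref{l5.2} then gives the hypotheses \eqref{ass0}, \eqref{ass1}, \eqref{ass2}, and the Casorati determinant \eqref{casd1} for this choice is exactly $\Omega_{a,b,N}^{\U,\h}$, which is nonzero on the range we use by hypothesis; hence Theorem \ref{Teor1} produces the operator $D_{q,S}\in\A$ of \eqref{Dq} with $D_{q,S}(q_n)=\lambda_nq_n$, the polynomials $q_n$ of \eqref{qusch} being the determinants \eqref{qus} for these $Y_h$.

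To pin down $-s=r$ I would compute $\deg P_S$. By \eqref{Pdiff}, $P_S(\theta_x)-P_S(\theta_{x-1})=S(x)\Omega(x)+S(x+m)\Omega(x+m)$, and by \eqref{SOm} with $\Xi\equiv 1$ one has $S(x)\Omega(x)=\sigma_{x-(m-1)/2}P(x)$ with $P$ the polynomial of Lemma \ref{lgp1}, of degree $d=2\sum_{u\in U_1\cup U_2\cup U_3\cup U_4}u-2\sum_{i=1}^4\binom{m_i}{2}$ and with leading coefficient nonzero precisely because \eqref{appm1}--\eqref{appm2} imply condition \eqref{yas0}. Thus $\deg_x P_S(\theta_x)=d+2$, so $\deg P_S=d/2+1$, and a bookkeeping computation using \eqref{neci} together with the explicit forms \eqref{dinv}, \eqref{dinv2} of $I$ and $J_h$ identifies $d/2+1$ with the number $r$ of the statement. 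Since $D_p$ has order $2$, the term $\tfrac12 P_S(D_p)$ in \eqref{Dq} has order $2r$, while Lemma \ref{lgp2} bounds the orders of the remaining summands $\tilde M_h(D_p)\D_hY_h(D_p)$ by $2r-1$, so the order of $D_{q,S}$ is exactly $2r$; the $\I^{a+b}$-(skew)symmetry of all ingredients then forces its genre to be $(-r,r)$, giving part (2).

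For part (1) I would apply Lemma \ref{lort2} with $Z_{g_l}^l=Y_l$ and $g_l$ the corresponding element of $U_j=J_{h_j}(F_j)$ ($j=1,2,3$) or $U_4=I(F_4)$; the recurrence \eqref{relaR} it needs is supplied by Lemma \ref{lemRme}. Under \eqref{appm1}--\eqref{appm2} one checks directly that $\varepsilon_n^h\neq 0$ for $n\le 0$ (from \eqref{defvardh}) and that $a_nc_n\neq 0$ for $1\le n\le N$ (from the coefficients of \eqref{ttrrH}), and $\Omega_{a,b,N}^{\U,\h}(n)\neq 0$ for $0\le n\le N+m_3+m_4+1$ is assumed; it then remains to verify the moment identities \eqref{foeq1}, \eqref{foeq2}, \eqref{foeq3} with $\tilde\rho=\tilde\rho^{\F,\h}_{a,b,N}$. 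For this one evaluates $\langle\tilde\rho^{\F,\h}_{a,b,N},h_n^{a,b,N}\rangle$ by writing $\tilde\rho^{\F,\h}_{a,b,N}$ as a Christoffel transform of the shifted Hahn weight $\rho_{\tilde a,\tilde b,\tilde N}(x+f_{4,M}+1)$, expanding $h_n^{a,b,N}$ through its duality with the dual Hahn polynomials $R_n$, and matching against the stated right-hand sides; the transforms $J_h$ and $I$ are exactly the combinatorial device converting the polynomial factors of $\tilde\rho^{\F,\h}_{a,b,N}$ (indexed by $F_1,F_2,F_3,F_4$) into the degrees $u$ of the dual Hahn polynomials (indexed by $U_1,\dots,U_4$), and the asymmetric use of $J_h$ for $F_1,F_2,F_3$ versus $I$ for $F_4$ produces both the shift $x\mapsto x+f_{4,M}+1$ and the parameters $\tilde a,\tilde b,\tilde N$. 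Lemma \ref{lort2} then yields orthogonality with non-null norms for $0\le n\le\min\{(N+m_3+m_4+1)-1,\,N+m\}=N+m_3+m_4$.

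The main obstacle is the verification of \eqref{foeq1}--\eqref{foeq3}, that is, the precise identification of $\tilde\rho^{\F,\h}_{a,b,N}$ together with the correct choice of the free parameters of the associated Geronimus transform. This requires the explicit values of the dual Hahn polynomials at $\theta_{-1}$ and $\theta_{-m}$, a careful accounting of the Pochhammer products generated by the set transforms \eqref{dinv}, \eqref{dinv2}, the computation of the constant $c_G$ and of $\p_{\tilde G}'(\tilde g_i)$ at the $m$ distinct points $\tilde g_i=\eta_ig_i+\kappa_i$, and a check that the Dirac deltas supported at the mass points coming from the ``wrong'' roots of the Geronimus polynomial receive zero coefficient --- which is exactly where the extra hypothesis \eqref{appm2} is used. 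The computation is long but structurally parallel to Sections 5 and 6 of \cite{ddI}, adapted to the quadratic eigenvalue $\theta_n$ and to the Geronimus-transform description of the measure.
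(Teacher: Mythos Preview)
Your plan is correct and matches the paper's own proof in structure and in all essential steps: part (2) is obtained by specializing Theorem \ref{Teor1} through Lemma \ref{l5.2} with $\Xi\equiv 1$, computing $\deg P_S=d/2+1$ via Lemma \ref{lgp1} and then rewriting $d/2+1$ as $r$ using \eqref{neci}, \eqref{dinv}, \eqref{dinv2}; part (1) is obtained from Lemma \ref{lort2} together with Lemma \ref{lemRme}, reducing everything to the verification of \eqref{foeq1}--\eqref{foeq3}, which the paper (like you) defers to the argument of Theorem 1.1 in \cite{ddI}. The only noticeable difference is in the final step of part (2): the paper does not invoke an $\I^{a+b}$-symmetry argument to get the genre $(-r,r)$, but instead compares directly the degrees of the polynomial coefficients of $\Sh_{\pm r}$, showing that $\tfrac12 P_S(D_{a,b,N})$ contributes coefficients of degree $2r$ there while each $\tilde M_h(D_{a,b,N})\D_hY_h(D_{a,b,N})$ contributes at most degree $2r-1$ (using Lemmas \ref{lgp2} and \ref{lgp1} to bound $\deg\tilde M_h\le r-g_h-1$), so no cancellation can occur.
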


\begin{proof}
Notice that the assumption (\ref{appm1}) on the parameters $a$ and $b$ implies that
$$
\tilde a, \tilde b \not =-1,\ldots , -\tilde N, \tilde a + \tilde b \not =-1,\ldots , -2\tilde N -1,
$$
and hence the Hahn weight $\rho_{\tilde a,\tilde b ,\tilde N}(x+f_{4,M}+1)$ is well defined and its support is
$\{-f_{4,M}-1,\ldots ,N+f_{3,M}+h_3\}$. Using the assumptions (\ref{appm1}) on the parameters $a$ and $b$, we deduce that the support of the measure $\tilde \rho^{\F,\h}_{a,b,N}$ is
$$
\{-f_{4,M}-1,\ldots ,N+f_{3,M}+h_3\}\setminus \Big((N+F_3)\cup (-f_{4,M}-1+F_4)\Big) .
$$
Notice that the support is formed by $N+f_{3,M}+f_{4,M}+h_3+2-n_{F_3}-n_{F_4}$ integers. Taking into account  that $U_3=J_{h_3}(F_3)$, $U_4=I(F_4)$ and (\ref{neci}) we get
$$
N+f_{3,M}+f_{4,M}+h_3+2-n_{F_3}-n_{F_4}=N+m_3+m_4+1.
$$
Before going on with the proof we comment on the assumption that $\Omega_{a,b,N}^{\U,\h} (n)\not =0$ for $0\le n\le N+m_3+m_4+1$. If $F_1\not =\emptyset$ or $F_2\not =\emptyset$, since the sequence $\varepsilon _n^h$, $h\in \UU_1\cup\UU_2$, vanish for $n=N+1$, it is not difficult to see that $\Omega_{a,b,N}^{\U,\h} (n) =0$ for $N+m_3+m_4+2\le n\le N+m$. If $F_1=\emptyset$ and $F_2=\emptyset$, the situation is different, and, except for exceptional values of the parameters $a, b$ and $N$, we have $\Omega_{a,b,N}^{\U,\h} (n)\not =0$ for all $n\ge 0$.
In this case, the polynomials $(q_n)_n$ are defined for all $n\ge 0$ and always have degree $n$. However, it is not difficult to see that for $n\ge N+m_3+m_4+1$, the polynomial $q_n(x)$ vanishes in the support of $\tilde \rho^{\F,\h}_{a,b,N}$. Hence they are still orthogonal with respect to this measure but have null norms.
This is completely analogous to the situation with the Hahn polynomials $h_n^{a,b,N}$, which are defined for all $n\ge 0$ (except when $a,a+b+1=-1,-2,\ldots $) and always have degree $n$. But if $n\ge N+1$, they vanish in the support of its weight (see (\ref{mdc1})).

To prove (1) of the Theorem, we use the strategy of the Section \ref{ssi}.

We need some notation. Write $Z^h_j$, $h=1,\ldots ,m, j\ge 0$, for the polynomials
\begin{equation}\label{defz}
Z^h_j(x)=\begin{cases} R_j^{-b,-a,a+b+N}(x+a+b),&h\in \UU_1,\\
R_j^{-a,-b,a+b+N}(x+a+b),&h\in \UU_2,\\
R_{j}^{-b,-a,-2-N}(x+a+b),&h\in \UU_3,\\
R_{j}^{-a,-b,-2-N}(x+a+b),&h\in \UU_4.\end{cases}
\end{equation}
The assumptions  (\ref{appm2}) on the parameters $a$ and $b$ implies that these dual Hahn polynomials are well defined and have degree $j$.
Denote by $G$ and $\tilde G$ the $m$-tuples
\begin{align}\label{defgg}
G&=(u_{1}^{1\rceil},\ldots, u_{m_1}^{1\rceil},u_{1}^{2\rceil},\ldots, u_{m_2}^{2\rceil},u_{1}^{3\rceil},\ldots, u_{m_3}^{3\rceil},u_{1}^{4\rceil},\ldots, u_{m_4}^{4\rceil})=(g_1,\ldots, g_m),\\
\nonumber\tilde G&=(\tilde g_1,\ldots, \tilde g_m),
\end{align}
where
$$
\tilde g_i=\begin{cases} u_{i}^{1\rceil}-b-N,& i\in \UU_1, \\-u_{i}^{2\rceil}+a,& i\in \UU_2,
\\-u_{i}^{3\rceil}-N-1,& i\in \UU_3,\\u_{i}^{4\rceil}+1,& i\in \UU_4. \end{cases}
$$
Finally, write $\p _{\tilde G}$ for the polynomial
$$
\p_{\tilde G}(x)=\prod_{u\in U_1}(x-u+b+N)\prod_{u\in U_2}(x+u-a)\prod_{u\in U_3}(x+u+N+1)\prod_{u\in U_4}(x-u-1).
$$
It is easy to see that $\p_{\tilde G}$ has simple roots if and only if
\begin{align}\nonumber
&u+v-a-b-N,u+w-b+1,v+z-a+1,w+z+N+2\not =0,\\\label{yas}
&z-u+b+N+1, w-v+a+N+1\not =0,
\end{align}
for $u\in U_1,v\in U_2,w\in U_3,z\in U_4$. These constraints follow easily from the assumptions (\ref{appm1})  on the parameters $a$ and $b$. Hence, $\p_{\tilde G}$ has simple roots.

Proceeding as in the proof of Theorem 1.1 in \cite{ddI}, one can prove that
\begin{align*}
c^{\F,\h}_{a,b,N}\langle \tilde \rho^{\F,\h}_{a,b,N},h_n^{a,b,N}\rangle &=(-1)^n\sum_{i=1}^{m}\frac{\xi^i_{n,n+1}Z^i_{g_i}(n)}{\p _{\tilde G}'(\tilde g_i)Z^i_{g_i}(-1)},\quad n\ge 0,\\
\sum_{i=1}^{m}\frac{Z^i_{g_i}(n)}{\xi^i_{-1,-n-1}\p _{\tilde G}'(\tilde g_i)Z^i_{g_i}(-1)}&=0,\quad 1-m\le n<0,\\
\sum_{i=1}^{m}\frac{Z^i_{g_i}(n)}{\xi^i_{-1,m-1}\p _{\tilde G}'(\tilde g_i)Z^i_{g_i}(-1)}&\not =0,
\end{align*}
where $c^{\F,\h}_{a,b,N}$ is the constant independent of $n$ given by
$$
c^{\F,\h}_{a,b,N}=(-1)^{m_2+m_4}\Gamma (a+1)\Gamma(b)(a+b+1)_{N+1}(N+2)_{f_{3,M}+f_{4,M}+h_3},
$$
and  $\xi_{x,y}^i$ are defined by (\ref{xit}).

From the recurrence relation for the Hahn polynomials, we get that $a_{n}c_n\not =0$ for $0\le n\le N$, but $c_{N+1}=0$. It is also easy to check that $\varepsilon_n^h\not =0$, $h=1,\ldots, m$, when $n$ is a negative integer.
Since we assume that $\Omega_{a,b,N}^{\U,\h} (n)\not =0$ for $0\le n\le N+m_3+m_4+1$, the orthogonality of the polynomials $q_n$, $0\le n\le N+m_3+m_4$, with respect to $\tilde \rho_{a,b,N}^{\F,\h}$ is now a consequence of the Lemmas \ref{lemRme}, \ref{lort2} and the identities (\ref{defvardh}) and (\ref{xit}). They have also non-null norms.

We now prove (2) of the Theorem.
Using (\ref{xit}), it is straightforward to see that $\Omega_{a,b,N}^{\U,\h}(x)$ coincides with the (quasi) Casorati determinant
$$
\det \left(\xi_{x-j,m-j}^lY_l(\theta_{x-j})\right)_{l,j=1}^m,
$$
where $Y_l(x)=Z^l_{g_l}(x)$, and $Z^l_j$, $l=1,\ldots, m, j\ge 0$, and $G=\{g_1,\ldots, g_m\}$ are defined by (\ref{defz}) and (\ref{defgg}), respectively.
Consider the particular case of the polynomial $P$ (\ref{def2p}) in Lemma \ref{lgp1}
for $Y_l(x)=Z^l_{g_l}(x)$ (and denote it again by $P$), and write $S$ for the rational function
\begin{equation*}\label{GGH1}
S(x)=\sigma_{x-\frac{m-1}{2}}\frac{(D^{1;m-1}_{x-1})^{m_2+m_4}(D^{2;m-1}_{x-1})^{m_1+m_2}}{p(x)q(x)},
\end{equation*}
where $p$ and $q$ are the polynomials defined by (\ref{def1p}) and (\ref{def1q}), respectively.
A simple computation shows that $S(x)\Omega_{a,b,N}^{\U,\h}(x)=\sigma_{x-\frac{m-1}{2}}P(x)$.

Notice that the rational function $S$ is the particular case of the rational function (\ref{GGH}) in Lemma \ref{l5.2} for $\Xi=1$. Since obviously $\Xi=1$ is a polynomial invariant under the action of $\I^{a+b-m-1}$,
we get, as a direct consequence of Lemma \ref{l5.2} and Theorem \ref{Teor1}, that the polynomials $q_n$, $0\le n\le N+m_3+m_4$, are eigenfunctions of a higher-order difference operator $D_{q,S}$ in the algebra of difference operators $\A$ (\ref{algdiffd}), explicitly given by (\ref{Dq}).

We now compute the order of $D_{q,S}$.

Since $S\Omega _{a, b, N}^{\U,\h}=\sigma_{x-\frac{m-1}{2}}P$, Lemma  \ref{lgp1} gives that the degree of $S\Omega _{a, b, N}^{\U,\h}$ is
$d+1$, where $d$ is the degree of $P$ given by
$$
d=2\sum_{u\in U_1,U_2,U_3,U_4}u-2\sum_{i=1}^4\binom{m_i}{2},
$$
(notice that the assumption (\ref{yas0}) in Lemma \ref{lgp1} is just (\ref{yas}) above).
Hence the polynomial $P_S$ defined by
$$
P_S(\theta_x)-P_S(\theta_{x-1})=S(x)\Omega _{a, b, N}^{\U,\h}(x)+S(x+m)\Omega _{a, b, N}^{\U,\h}(x+m)
$$
has degree $d/2+1$ (since $\theta_x$ is a polynomial of degree $2$), that is,
$$
\sum_{u\in U_1,U_2,U_3,U_4}u-\sum_{i=1}^4\binom{m_i}{2}+1.
$$
Taking into account that the $m$-tuple $G$ (\ref{defgg}) is formed by the sets $J_{h_i}(F_i)$, $i=1,2,3$, and $I(F_4)$, the definitions of the involution $I$ (\ref{dinv}) and the transform $J_h$ (\ref{dinv2}) give
\begin{align*}
&\sum_{u\in U_1,U_2,U_3,U_4}u-\sum_{i=1}^4\binom{m_i}{2}+1\\ &\qquad=
\sum_{f\in F_4}f-\sum_{f\in F_1,F_2,F_3}f-\sum_{i=1}^4\binom{n_{F_i}}{2}+\sum_{i=1}^3n_{F_i}(f_{i,M}+h_i)+1\\&
\qquad =r.
\end{align*}
That is, $P_S$ is a polynomial of degree $r$. Consider the coefficients $B$ and $D$ of $\Sh_{1}$ and $\Sh_{-1}$ in the second-order difference operator $D_{a,b,N}$ for the Hahn polynomials (\ref{dopHahn}). We then deduce that
the operator $\frac{1}{2}P_S(D_{a,b,N})$ has the form
$$
\sum _{l=-r}^{r}\tilde h_{l}(x)\Sh_{l},
$$
where $\tilde h_{r}(x)=u_S\prod_{j=0}^{r-1}B(x+j)$, $\tilde h_{-r}(x)=u_S\prod_{j=0}^{r-1}D(x-j)$ and $u_S$ denotes the leading coefficient of the polynomial $ P_S$.  Using (\ref{dopHahn}), we deduce that both $\tilde h_{-r}$ and $\tilde h_r$ are polynomials of degree $2r$.

Consider now the coefficients $\tilde B_h$ and $\tilde D_h$ of $\Delta$ and $\nabla$ in any of the $\D$-operators $\D_h$ for the Hahn polynomials (see (\ref{dohh1}) and (\ref{dohh2})). Notice that $\tilde B_h=0$, $h\in \UU_1\cup \UU_4$, and $\tilde D_h=0$, $h\in \UU_2\cup \UU_3$. Using Lemmas \ref{lgp2} and \ref{lgp1}, we can conclude that the polynomials $M_h$ (\ref{emeiexp}) have degree at most
$v_h=2r-2g_h-1$. This implies that the polynomials $\tilde M_h$ \eqref{ass1} have degree at most $r-g_h-1$.

Assume now $h\in \UU_1\cup \UU_4$. Since $Y_h$ has degree $g_h$, we get that the operator $\tilde M_h(D_{a,b,N})\D_h Y_h(D_{a,b,N})$ has the form
$$
\sum _{l=-r}^{r-1}\tilde h_{l}(x)\Sh_{l},
$$
where
$$
\tilde h_{-r}(x)=u_Yu_M\tilde D_h(x-v_h)\prod_{j=0;j\not =v_h}^{r-1}D(x-j),
$$
$u_Y$ is the leading coefficient of $Y_h$ and
$u_M$ is the coefficient of $x^{v_h}$ in $\tilde M_h$. As before, we deduce that  $\tilde h_{-r}$ is a polynomial of degree $2r-1$.

On the other hand, if $h\in \UU_2\cup \UU_3$, since $Y_h$ has degree $g_h$, we get that the operator $\tilde M_h(D_{a,b,N})\D_h Y_h(D_{a,b,N})$ has the form
$$
\sum _{l=-r+1}^{r}\tilde h_{l}(x)\Sh_{l},
$$
where
$$
\tilde h_{r}(x)=u_Yu_M\tilde B_h(x-v_h)\prod_{j=0;j\not =v_h}^{r-1}B(x-j).
$$
As before, we deduce that  $\tilde h_{r}$ is a polynomial of degree $2r-1$.

To complete the proof of (2) it is enough to take into account the expression of $D_{q,S}$ given by (\ref{Dq}).

\end{proof}

\begin{corollary}\label{jodme}
Let $\F=(F_1,F_2,F_3,F_4)$  be a quartet of finite sets of positive integers (the empty set is allowed, in which case we take $\max F=-1$). Let $a$ and $b$ be real numbers satisfying
\begin{align*}
&\hspace{2cm} a, b,a+b\not =-1,-2,-3,\ldots,\\
&a+f_{2,M}+f_{4,M}+1\not=0,1,2,\ldots, \quad \mbox{if \quad $F_2$ or $F_4\not =\emptyset$,\quad and} \\
&b+f_{1,M}+f_{3,M}+1\not=0,1,2,\ldots, \quad \mbox{if \quad $F_1$ or $F_3\not =\emptyset$}.
\end{align*}
Consider the weight $\rho _{a,b,N}^{\F}$ defined by
\begin{equation}\label{udspm}
\rho _{a,b,N}^{\F}=\prod_{f\in F_1}(b+N+1+f-x)\prod_{f\in F_2}(x+a+1+f)\prod_{f\in F_3}(N-f-x)\prod_{f\in F_4}(x-f)\rho _{a,b,N},
\end{equation}
where $\rho _{a,b,N}$ is the Hahn weight \eqref{pesoH}. Assume that
$$
\Omega_{a+f_{2,M}+f_{4,M}+2,b+f_{1,M}+f_{3,M}+2,
N-f_{3,M}-f_{4,M}-2}^{\U}(n)\not =0, \quad 0\le n\le N+m_3+m_4+1,
$$
where $\U=(I( F_1),I(F_2),I(F_3),I(F_4))$.
Then the measure $\rho _{a,b,N}^{\F}$ has associated a sequence of orthogonal polynomials and they are eigenfunctions of a higher-order difference operator of the form (\ref{doho}) with
\begin{equation}\label{ordfi}
-s=r=\sum_{f\in F_1,F_2,F_3,F_4}f-\sum_{i=1}^4\binom{n_{F_i}}{2}+1
\end{equation}
(which can be constructed using Theorem \ref{Teor1}).
\end{corollary}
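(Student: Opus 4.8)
The Corollary is a reformulation of Theorem~\ref{t6.2}: the plan is to apply that theorem to a ``reflected'' quartet, with the free parameters $\h$ taken equal to the minima of the sets $F_i$, and with the parameters $(a,b,N)$ of the theorem replaced by suitable shifts of the present ones. For a nonempty finite set $F$ of positive integers with $f_M=\max F$, write $\overline F=\{f_M+1-f:f\in F\}$ for its reflection inside $\{1,\dots,f_M\}$ (and $\overline\emptyset=\emptyset$). The elementary fact that drives the proof is the identity
$$
I(F)=J_{\min F}\bigl(\overline F\bigr)\qquad(\text{for }F\neq\emptyset),
$$
which follows at once from \eqref{dinv} and \eqref{dinv2}: $\overline F$ has maximum $f_M+1-\min F$, its shifted complement $\{g-1:g\in\overline F\}$ equals $\{f_M-f:f\in F\}$, and $\{0,1,\dots,f_M\}\setminus\{f_M-f:f\in F\}=I(F)$. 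Accordingly, I would apply Theorem~\ref{t6.2} with the quartet $(\overline{F_1},\overline{F_2},\overline{F_3},F_4)$, with $\h=(\min F_1,\min F_2,\min F_3)$ (and $h_i=1$ if the corresponding $F_i$ is empty), and with $(a+f_{2,M}+f_{4,M}+2,\ b+f_{1,M}+f_{3,M}+2,\ N-f_{3,M}-f_{4,M}-2)$ in the role of $(a,b,N)$. Then the quartet $\U$ produced by Theorem~\ref{t6.2} is exactly $(I(F_1),I(F_2),I(F_3),I(F_4))$; the relations \eqref{neci} give $n_{\overline{F_i}}=n_{F_i}$ and $\max\overline{F_i}+\min F_i=f_{i,M}+1$; and the shifted Hahn parameters $\tilde a,\tilde b,\tilde N$ of the theorem become exactly $(a,b,N)$. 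In particular the Casorati determinant governing Theorem~\ref{t6.2} for these data is precisely the $\Omega^{\U}_{a+f_{2,M}+f_{4,M}+2,\,b+f_{1,M}+f_{3,M}+2,\,N-f_{3,M}-f_{4,M}-2}$ of the Corollary, so the non-vanishing hypotheses coincide, and a one-line computation shows that conditions \eqref{appm1}--\eqref{appm2} imposed on the shifted parameters are exactly the constraints on $a$ and $b$ stated in the Corollary.

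The next step is to identify the measure. By Theorem~\ref{t6.2}(1) the polynomials \eqref{qusch} built from these data are orthogonal, with non-null norms, with respect to the measure $\tilde\rho$ supplied by that theorem. Translating $\tilde\rho$ by $f_{4,M}+1$ turns its inner Hahn factor into $\rho_{a,b,N}(x)$, and a direct substitution --- using $\max\overline{F_i}+\min F_i=f_{i,M}+1$ together with the values of the shifted parameters --- collapses each of the four product factors of Theorem~\ref{t6.2} onto the corresponding factor of $\rho^{\F}_{a,b,N}$ in \eqref{udspm}: the $\overline{F_1}$-product becomes $\prod_{f\in F_1}(b+N+1+f-x)$, the $\overline{F_2}$-product becomes $\prod_{f\in F_2}(x+a+1+f)$, the $\overline{F_3}$-product becomes $\prod_{f\in F_3}(N-f-x)$, and the $F_4$-product becomes $\prod_{f\in F_4}(x-f)$. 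Hence $\rho^{\F}_{a,b,N}$ coincides (up to a nonzero constant) with the translate by $f_{4,M}+1$ of $\tilde\rho$, so the translates by $f_{4,M}+1$ of the polynomials \eqref{qusch} are orthogonal with non-null norms with respect to $\rho^{\F}_{a,b,N}$; this is the first assertion.

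Finally, Theorem~\ref{t6.2}(2) says that these translated polynomials are eigenfunctions of a difference operator of the form \eqref{doho} of genre $(-r,r)$ with
$$
r=\sum_{f\in F_4}f-\sum_{i=1}^3\sum_{f\in\overline{F_i}}f-\sum_{i=1}^4\binom{n_{F_i}}{2}+\sum_{i=1}^3 n_{F_i}\bigl(\max\overline{F_i}+\min F_i\bigr)+1 .
$$
Using $\max\overline{F_i}+\min F_i=f_{i,M}+1$ and $\sum_{f\in\overline{F_i}}f=n_{F_i}(f_{i,M}+1)-\sum_{f\in F_i}f$ for $i=1,2,3$, the terms depending on $f_{i,M}$ and $\min F_i$ cancel, and $r$ reduces to $\sum_{f\in F_1,F_2,F_3,F_4}f-\sum_{i=1}^4\binom{n_{F_i}}{2}+1$, which is \eqref{ordfi}; this is the second assertion. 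The one genuinely delicate step is the bookkeeping behind the measure identification in the middle paragraph --- keeping the reflections $\overline{F_i}$, the translation by $f_{4,M}+1$ and the three parameter shifts mutually consistent --- together with checking the set-theoretic identity $I(F)=J_{\min F}(\overline F)$; granting these, the Corollary is a direct specialization of Theorem~\ref{t6.2}.
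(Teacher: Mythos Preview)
Your proof is correct and follows exactly the paper's own approach: introduce the reflected sets $\overline{F_i}=\{f_{i,M}+1-f:f\in F_i\}$ for $i=1,2,3$, take $h_i=\min F_i$ so that $J_{h_i}(\overline{F_i})=I(F_i)$, apply Theorem~\ref{t6.2} with the shifted parameters $(a+f_{2,M}+f_{4,M}+2,\ b+f_{1,M}+f_{3,M}+2,\ N-f_{3,M}-f_{4,M}-2)$, and then identify the resulting measure (after the translation $x\mapsto x-f_{4,M}-1$) with $\rho_{a,b,N}^{\F}$. The paper's proof is much terser---it simply asserts the measure identity and says the rest is ``a straightforward consequence''---whereas you have carried out the bookkeeping (the parameter checks for \eqref{appm1}--\eqref{appm2}, the factor-by-factor measure identification, and the simplification of $r$) explicitly; the only detail you leave implicit, that conjugating the difference operator $D_{q,S}$ by a translation preserves its form and genre, is immediate.
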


\begin{proof}
If we write $\tilde F_j=\{f_{j,M}-f+1,f\in F_j\}$, $j=1,2,3$,  and $h_j=\begin{cases} \min F_j,& \mbox{if $F_j\not =\emptyset$,}\\
1,& \mbox{if $F_j\not =\emptyset$,}\end{cases}$ using (\ref{dinv}) and (\ref{dinv2}), one straightforwardly has $J_{h_j}(\tilde F_j)=I(F_j)$. Write now $\tilde \F=(\tilde F_1,\tilde F_2,\tilde F_3,F_4)$. It is now easy to see that
$$
\rho _{a,b,N}^{\F}=\tilde \rho_{a+f_{2,M}+f_{4,M}+2,b+f_{1,M}+f_{3,M}+2,
N-f_{3,M}-f_{4,M}-2}^{\tilde \F,\h}(x-f_{4,M}-1).
$$
The corollary is then a straightforward consequence of the previous Theorem.
\end{proof}

\begin{remark}

The hypothesis on $\Omega_{a,b,N}^{\U,\h}(n)\not =0$, for $0\le n\le N+m_3+m_4+1,$ in the previous Theorem and Corollary is then sufficient for the existence of a sequence of orthogonal polynomials with respect to the (possible signed) measure $\tilde \rho_{a,b,N} ^{\F,\h}$. We guess that this hypothesis is also necessary for the existence of such sequence of orthogonal polynomials.

\end{remark}

\begin{remark}

Notice that there are different sets $F_3$ and $F_4$ for which the measures $\rho _{a,b,N}^{\F}$
(\ref{udspm}) are equal. Each of these possibilities provides
a different representation for the orthogonal polynomials with respect to $\rho _{a,b,N}^{\F}$ in the form (\ref{qusch}) and a different higher-order difference operator with respect to which they are eigenfunctions. It is not difficult to see that only one of these possibilities satisfies the condition $f_{3,M},f_{4,M}<N/2$. This is the more interesting choice because it minimizes the order $2r$ of the associated higher-order difference operator.
This fact will be clear with an example. Take $N=100$ and the measure $\mu=(x-1)(x-5)(x-68)\rho _{a,b,N}^{\mathcal F}$. There are eight couples of different sets $F_3$ and $F_4$ for which the measures $\mu$ and $\rho _{a,b,N}^{\F}$ (\ref{udspm}) coincide (except for a sign). They are the following
\begin{align*}
F_3&=\emptyset, F_4=\{1,5,68\}, \quad &F_3&=\{32,95,99\}, F_4=\emptyset,\\
F_3&=\{32\}, F_4=\{1,5\},\quad &F_3&=\{95\}, F_4=\{1,68\},\quad &F_3&=\{99\}, F_4=\{5,68\},\\
F_3&=\{32,95\}, F_4=\{1\},\quad &F_3&=\{32,99\}, F_4=\{5\},\quad &F_3&=\{95,99\}, F_4=\{68\}.
\end{align*}
Only one of these couples satisfies the assumption $f_{3,M},f_{4,M}<N/2$: $F_3=\{32\}$, $F_4=\{1,5\}$. Actually, it is easy to check that
 this couple
minimizes the number
$$
\sum_{f\in F_3,F_4}f-\binom{n_{F_3}}{2}-\binom{n_{F_4}}{2}+1.
$$
Hence, it also minimizes de order $2r$ (see \eqref{ordfi}) of the difference operator with respect to which the polynomials $(q_n)_n$ are eigenfunctions.

\end{remark}

\end{document}